\newcommand{\bbN}{{\mathbb{N}}}
\newcommand{\bbR}{{\mathbb{R}}}
\newcommand{\bbP}{{\mathbb{P}}}
\newcommand{\bbE}{{\mathbb{E}}}
\newcommand{\bbZ}{{\mathbb{Z}}}
\newcommand{\bbT}{{\mathbb{T}}}
\newcommand{\lb}{\label}
\newcommand{\supp}{\text{\rm{supp}}}
\newcommand{\beq}{\begin{equation}}
\newcommand{\eeq}{\end{equation}}
\newcommand{\ba}{\begin{align}}
\newcommand{\ea}{\end{align}}
\newcommand{\eps}{\varepsilon}
\newcommand{\tht}{\theta}
\newcommand{\til}{\tilde}
\newcounter{smalllist}
\newenvironment{SL}{\begin{list}{{\rm\roman{smalllist})}}{%
\setlength{\topsep}{0mm}\setlength{\parsep}{0mm}\setlength{\itemsep}{0mm}%
\setlength{\labelwidth}{2em}\setlength{\leftmargin}{2em}\usecounter{smalllist}%
}}{\end{list}}
\numberwithin{equation}{section}
\newtheorem{theorem}{Theorem}[section]
\newtheorem{lemma}[theorem]{Lemma}
\newtheorem{corollary}[theorem]{Corollary}
\theoremstyle{definition}
\newtheorem{example}[theorem]{Example}
\theoremstyle{remark}
\begin{document}
\title[Front Speed-up and Quenching]
{Pulsating Front Speed-up and Quenching of Reaction \\ by Fast
Advection}

\author{Andrej Zlato\v s}

\address{\noindent Department of Mathematics \\ University of
Chicago \\ Chicago, IL 60637, USA \newline Email: \tt
zlatos@math.uchicago.edu}


\begin{abstract}
We consider reaction-diffusion equations with combustion-type non-linearities in two dimensions and study speed-up of their pulsating fronts by general periodic incompressible flows with a cellular structure. We show that the occurence of front speed-up in the sense $\lim_{A\to\infty} c_*(A)=\infty$, with $A$ the amplitude of the flow and $c_*(A)$ the (minimal) front speed, only depends on the geometry of the flow and not on the reaction function. In particular, front speed-up occurs for KPP reactions if and only if it does for ignition reactions. We provide a sharp characterization of the periodic symmetric flows which achieve this speed-up and also show that these are precisely those which, when scaled properly, are able to quench any ignition reaction.
\end{abstract}

\maketitle

\section{Introduction and Examples} \lb{S1}

In this paper we study the effects of strong incompressible advection
on combustion. We consider the reaction-advection-diffusion equation
\begin{equation} \lb{1.1}
T_t + Au(x)\cdot\nabla T = \Delta T + f(T), \qquad
T(0,x)=T_0(x)\in[0,1]
\end{equation}
on $D\equiv\bbR\times\bbT^{d-1}$, with $u$ a prescribed flow profile
and $A\gg 1$ its amplitude. Here $T(t,x)\in[0,1]$ is the normalized
temperature of a premixed combustible gas and $f$ is the burning
rate.

We assume that $u\in C^{1,\eps}(D)$ is a periodic incompressible
(i.e., $\nabla \cdot u\equiv 0$) vector field which is symmetric
across the hyperplane $x_1=0$. That is, $u(Rx)=Ru(x)$ where
$R(x_1,\dots,x_d)=(-x_1,x_2,x_3,\dots,x_d)$ is the reflection across
$x_1=0$. If the period of $u$ in $x_1$ is $p$, then this implies
that $u$ is symmetric across each hyperplane $x_1=k p$, $k\in\bbZ$.
Hence $u$ is a periodic symmetric flow of {\it cellular type} (since
$u_1(x)=0$ when $x_1 \in p\bbZ$) with $[0,p]\times\bbT^{d-1}$ a cell
of periodicity.

The reaction function $f\in C^{1,\eps}([0,1])$ is of {\it combustion
type}. That is, there is $\tht_0\in[0,1)$ such that $f(s)=0$ for
$s\in[0,\tht_0]\cup\{1\}$ and $f(s)>0$ for $s\in(\tht_0,1)$, and $f$
is non-increasing on $(1-\eps,1)$ for some $\eps>0$. This includes
the {\it ignition reaction} term with $\tht_0>0$ and {\it positive
reaction} term with $\tht_0=0$. In the latter case we single out the
{\it Kolmogorov-Petrovskii-Piskunov (KPP) reaction} \cite{KPP} with
$0<f(s)\le sf'(0)$ for all $s\in(0,1)$.

We will be interested in two effects of the strong flow $Au$ on
combustion: pulsating front speed enhancement and quenching of
reaction. This problem has recently seen a flurry of activity ---
see
\cite{ABP,B,BHN-2,CKOR,CKR,CKRZ,FKR,Heinze,KR,KZ,NR,RZ,ZlaArrh,ZlaMix,ZlaPercol}.
A {\it pulsating front} is a solution of \eqref{1.1} of the form
$T(t,x)=U(x_1-ct,x)$, with $c$ the front speed and $U(s,x)$
periodic in $x_1$ (with period $p$) such that
\[
\lim_{s\to-\infty}U(s,x)=1 \qquad\text{and}\qquad
\lim_{s\to+\infty}U(s,x)=0,
\]
uniformly in $x$. It is well known \cite{BHN-1} that in the case of
positive reaction there is $c_*(A)$, called the {\it minimal
pulsating front speed}, such that pulsating fronts exist precisely
for speeds $c\ge c_*(A)$. In the ignition reaction case the front
speed is unique and we again denote it $c_*(A)$. In the present
paper we will be interested in the enhancement of this (minimal)
front speed by strong flows.

We say that the flow $Au$ {\it quenches} (extinguishes) the initial
``flame'' $T_0$ if the solution of \eqref{1.1} satisfies
$\|T(t,\cdot)\|_\infty \to 0$ as $t\to\infty$. Here one usually
considers compactly supported initial data. The flow profile $u$ is
said to be {\it quenching} for the reaction $f$ if for any compactly
supported initial datum $T_0$ there is an amplitude $A_0$ such that
$T_0$ is quenched by the flow $Au$ whenever $A\ge A_0$. We note that
quenching never happens for KPP reactions --- the solutions of
\eqref{1.1} for compactly supported non-zero $T_0$ always propagate
and the speed of their spreading equals $c_*(A)$ \cite{BHN-1,
Weinberger}.


In this paper we characterize those periodic symmetric
incompressible flows in two dimensions which achieve speed-up of
fronts and, if scaled properly, quenching of any ignition reaction.
For $l>0$ we denote by $l\bbT$ the interval $[0,l]$ with its ends
identified, and we let $u^{(l)}(x)\equiv u(x/l)$ be the scaled flow
on $\bbR\times l\bbT$ (with cells of size $lp\times l$).

\begin{theorem}\lb{T.1.1}
Let $u$ be a $C^{1,\eps}$ incompressible $p$-periodic flow on
$D=\bbR\times\bbT$ which is symmetric across $x_1=0$, and let $f$ be
any combustion-type reaction.
\begin{SL}
\item[{\rm{(i)}}] If the equation
\begin{equation}\label{1.2}
u\cdot\nabla\psi=u_1
\end{equation}
on $p\bbT\times\bbT$ has a solution $\psi\in H^1(p\bbT\times\bbT)$,
then
\begin{equation}\label{1.3}
\limsup_{A\to\infty} c_*(A)<\infty
\end{equation}
and no $u^{(l)}$ is quenching for $f$.
\item[{\rm{(ii)}}] If \eqref{1.2} has no $H^1(p\bbT\times\bbT)$-solutions, then
\begin{equation}\label{1.4}
\lim_{A\to\infty} c_*(A)=\infty
\end{equation}
and if $f$ is of ignition type, then there is $l_0\in(0,\infty)$
such that the flow $u^{(l)}$ on $\bbR\times l\bbT$ is quenching for
$f$ when $l<l_0$ and not quenching when $l>l_0$.
\end{SL}
\end{theorem}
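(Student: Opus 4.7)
\emph{Plan.} Both parts of Theorem~\ref{T.1.1} hinge on interpreting \eqref{1.2} as the compatibility condition for a ``stream-aligned potential'' of $u$ in the $x_1$-direction: when $\psi\in H^1$ exists, an exponential of $\psi$ serves as a test function that cancels the $A$-dependence in a principal-eigenvalue bound on $c_*(A)$; when it does not, a compactness argument forces the eigenvalue, and hence $c_*$, to blow up with $A$. My plan is to convert this algebraic dichotomy into the asymptotic statements about $c_*(A)$ and then bootstrap the speed-up of Part~(ii) into the quenching threshold.

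\emph{Part (i).} Suppose $\psi\in H^1(p\bbT\times\bbT)$ solves \eqref{1.2} and let $\mu(\lambda,A)$ be the principal eigenvalue of the linearized traveling-wave operator
\[
L^A_\lambda w=-\Delta w+2\lambda\partial_1 w+Au\cdot\nabla w-A\lambda u_1 w-(\lambda^2+f'(0))w,
\]
governing the KPP-type bound $c_*^{\mathrm{KPP}}(A)=\inf_{\lambda>0}\mu(\lambda,A)/\lambda$. With $w=e^{\lambda\psi}$ one computes $u\cdot\nabla w=\lambda u_1 w$, so the two $A$-dependent terms in $L^A_\lambda w/w$ cancel exactly and the Donsker--Varadhan bound $\mu(\lambda,A)\le\sup_x L^A_\lambda w/w$ is $A$-independent. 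Sandwiching the combustion $f$ from above by a KPP reaction $\tilde f\ge f$ with $\tilde f'(0)>0$ then gives $c_*(A)\le c_*^{\mathrm{KPP},\tilde f}(A)\le\mu(\lambda,A)/\lambda$, which is \eqref{1.3}. For non-quenching, the same $e^{\lambda\psi}$-based construction produces a stationary sub-solution of \eqref{1.1} that exceeds $\tht_0$ on a nontrivial set for every $A$; scale-invariance of \eqref{1.2} extends the argument to every $u^{(l)}$. The $H^1$-only regularity of $\psi$ is handled by mollifying $\psi$ to $\psi_\eps$ and using an $L^2$-integrated version of the eigenvalue bound, so that the mollification error depends only on $\|\nabla(\psi_\eps-\psi)\|_{L^2}$ and is uniform in $A$.

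\emph{Part (ii).} For $c_*(A)\to\infty$ I argue by contradiction: if $\mu(\lambda,A_n)$ stays bounded along $A_n\to\infty$ for some $\lambda$, then normalizing the positive principal eigenfunctions $\phi_n$ yields an $H^1$-bounded sequence (thanks to the Rayleigh quotient and $\nabla\cdot u=0$). Extracting a weak limit $\phi_\infty$ via Rellich and passing to the limit in the eigenvalue equation rescaled by $1/A_n$ yields $u\cdot\nabla(\log\phi_\infty)=\lambda u_1$ weakly, hence an $H^1$ solution of \eqref{1.2}, contradicting the hypothesis. Optimizing in $\lambda$ gives \eqref{1.4}. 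For the quenching dichotomy I use the substitution $(y,\tau)=(x/l,t/l^2)$, which turns the $l$-scaled problem at amplitude $A$ into the unscaled problem at amplitude $Al$ with reaction $l^2 f$. Combined with $c_*(Al)\to\infty$, a mixing-type quenching criterion---large effective speed disperses hot regions faster than the reaction can ignite them---produces quenching for all sufficiently small $l$. Non-quenching for large $l$ follows from a stationary ignition-sustaining sub-solution inside a single cell, which exists once the cell contains a ball exceeding the quenching radius of the pure reaction equation. Monotonicity of the quenching property in $l$ is then extracted by sub/super-solution bootstrapping, giving a single threshold $l_0\in(0,\infty)$.

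\emph{Main obstacle.} The most delicate step is the contradiction argument in Part~(ii) that extracts an $H^1$ solution of \eqref{1.2} from a bounded-eigenvalue sequence. It requires uniform $H^1$-control on the eigenfunctions in the strongly advection-dominated regime $A_n\to\infty$, Rellich compactness in $L^2$, and identification of the weak limit through cancellations arising from $\nabla\cdot u=0$ and the symmetry of $u$ across $x_1=0$---which is where the symmetry hypothesis enters essentially. The quenching monotonicity in $l$ is a secondary difficulty because the rescaling simultaneously strengthens the advection and weakens the reaction, so a single comparison does not suffice and careful layering of sub- and super-solutions is needed.
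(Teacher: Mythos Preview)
Your approach recovers the KPP case essentially as in \cite{RZ} (the paper imports that as Lemma~5.1), but it does not close the gap that is the actual content of the theorem: extending the speed-up/boundedness dichotomy from KPP to \emph{all} combustion-type reactions. In Part~(ii) you argue that boundedness of $\mu(\lambda,A_n)$ would produce an $H^1$ solution of \eqref{1.2}; granting this, you get $c_*^{\mathrm{KPP}}(A)\to\infty$. But the variational formula $c_*=\inf_\lambda \mu(\lambda,A)/\lambda$ is a KPP identity only, and the comparison runs the wrong way: for any combustion $f$ you have $c_*^{f}(A)\le c_*^{\mathrm{KPP}}(A)$, which gives no lower bound on $c_*^{f}(A)$. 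So from your argument you cannot conclude \eqref{1.4} for ignition reactions, and indeed Corollary~1.2 (``ignition speed-up $\Leftrightarrow$ KPP speed-up'') is one of the main points of the paper, not an input you may assume.

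The paper bridges this gap by an entirely different, probabilistic route. It studies the diffusion $dX_t=\sqrt{2}\,dB_t-Au(X_t)\,dt$ and proves a dichotomy (Lemma~2.2): along any $A_n\to\infty$ either the process spreads (for all $t$, any prescribed box is escaped with high probability for some $x,n$) or it is uniformly tight. The KPP result from \cite{RZ} is used only to decide which alternative holds under the hypothesis on \eqref{1.2}. Then, crucially, Lemma~4.1 shows directly---for \emph{any} combustion $f$, via front-width estimates obtained from energy bounds \eqref{4.3}--\eqref{4.4} and the Feynman--Kac inequality \eqref{2.4a}---that the ``spreading'' alternative forces $\limsup c_*(A_n)=\infty$. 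Your eigenvalue compactness argument does not substitute for this step. Likewise, the non-quenching in Part~(i) is not obtained from $e^{\lambda\psi}$; the paper constructs an explicit compactly supported sub-datum (Lemma~3.2) using the tightness alternative, random-walk tail bounds \eqref{3.1}--\eqref{3.3}, and a convexity trick with a modified reaction. Your claim that $e^{\lambda\psi}$ yields a stationary sub-solution above $\theta_0$ for every $A$ is not substantiated and is unlikely to work as stated, since $e^{\lambda\psi}$ is periodic and carries no information about the ignition threshold. Finally, the quenching monotonicity in $l$ is simpler than you suggest: the rescaling turns the question into quenching of $l^2 f$ by $u$, and $f\ge 0$ with comparison gives monotonicity in one line; the subtle part is $l_0>0$, which the paper gets from Lemma~4.2 via a maximum-principle continuation argument on level sets of the linear solution, again resting on the stochastic dichotomy rather than on front speeds.
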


\noindent
{\it Remarks.} 1. The proof shows that in (ii), $l_0\ge c\|f(s)/s\|_\infty^{-1/2}$ for some $u$-independent $c>0$.
It can also be showed that the claim $l_0>0$ in (ii) extends to some positive reactions
that are weak at low temperatures (more precisely, $f(s)\le \alpha
s^{\beta}$ for some $\alpha>0$ and $\beta>3$ --- see Corollary~\ref{C.5.2}),
in particular, the {\it Arrhenius} reaction
$f(s)=e^{-C/s}(1-s)$, $C>0$. On the other hand, if $f(s)\ge \alpha
s^{\beta}$ for some $\alpha>0$, $\beta<3$, and all small $s$, then $l_0=0$ for any $u$ \cite{ZlaArrh}.
\smallskip

2. We note that $l_0=\infty$ is impossible for cellular flows in two
dimensions --- see \cite{ZlaMix} which studies {\it strongly
quenching flows} $u$, that is, quenching for any
ignition reaction and any $l$.
\smallskip

3. Although we only consider periodic boundary conditions here, it
is easy to see that Theorem \ref{T.1.1} remains valid for
\eqref{1.1} on $\bbR\times[0,1]$ with Neumann boundary conditions,
provided $u_2(x)=0$ when $x_2\in\{0,1\}$.
\smallskip

4. Although a part of our analysis --- Sections \ref{S2} and \ref{S3} --- is
valid in any dimension, it remains an open quenstion whether Theorem \ref{T.1.1}
also extends beyond two dimensions.
\smallskip

Theorem \ref{T.1.1} has the following corollary:

\begin{corollary} \lb{C.1.2}
Let $u$ be a $C^{1,\eps}$ incompressible $p$-periodic flow on
$D=\bbR\times\bbT$ which is symmetric across $x_1=0$. Then speed-up of pulsating
fronts by $u$ in the sense of \eqref{1.4} occurs for ignition reactions if
and only if it occurs for KPP reactions.
\end{corollary}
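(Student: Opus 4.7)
The plan is to derive Corollary \ref{C.1.2} as an immediate consequence of Theorem \ref{T.1.1}, exploiting the fact that the dichotomy established there depends only on the flow $u$ and not on the choice of reaction.

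First, I would observe that both ignition reactions and KPP reactions fall under the combustion-type class defined in the introduction: ignition reactions correspond to $\tht_0>0$, and KPP reactions correspond to $\tht_0=0$ together with $0<f(s)\le sf'(0)$. Hence Theorem \ref{T.1.1} applies in both situations, with the same flow $u$ and the same cell problem \eqref{1.2}. The key point is that which of the two mutually exclusive cases (i) and (ii) holds is determined entirely by whether \eqref{1.2} admits an $H^1(p\bbT\times\bbT)$-solution, a property of $u$ alone.

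Next I would split on this dichotomy. Suppose \eqref{1.2} has an $H^1(p\bbT\times\bbT)$-solution. Then, fixing any ignition reaction $f_{\mathrm{ign}}$ and any KPP reaction $f_{\mathrm{KPP}}$, applying Theorem \ref{T.1.1}(i) separately to each gives $\limsup_{A\to\infty} c_*(A)<\infty$ in both cases, so \eqref{1.4} fails for both reaction classes. Suppose instead that \eqref{1.2} has no $H^1(p\bbT\times\bbT)$-solution. Then Theorem \ref{T.1.1}(ii) applied to each of $f_{\mathrm{ign}}$ and $f_{\mathrm{KPP}}$ yields $\lim_{A\to\infty} c_*(A)=\infty$, so \eqref{1.4} holds for both. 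Combining the two cases establishes the biconditional in the corollary.

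Since Theorem \ref{T.1.1} is being taken as given, there is essentially no obstacle; Corollary \ref{C.1.2} amounts to reading off the reaction-independence of the dichotomy. The only modest subtlety worth flagging is that $c_*(A)$ has slightly different meanings in the two cases (the unique front speed for ignition, the minimal front speed equal to the spreading speed for KPP), but Theorem \ref{T.1.1} is formulated to cover both uniformly, so this poses no issue.
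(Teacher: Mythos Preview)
Your proposal is correct and matches the paper's approach exactly: the paper states Corollary~\ref{C.1.2} as an immediate consequence of Theorem~\ref{T.1.1} without a separate proof, relying precisely on the observation that the dichotomy between cases (i) and (ii) depends only on the flow $u$ through \eqref{1.2} and not on the choice of combustion-type reaction $f$.
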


{\it Remark.} Although speed-up of KPP fronts has been studied extensively (see, e.g., \cite{B,BHN-2,CKOR,Heinze,KR,NR,RZ,ZlaPercol}), rigorous results on ignition front speed-up have so far been established only in two dimensions for percolating flows and special cellular flows \cite{KR} (see below).
\smallskip

It is not surprising that the flows which achieve speed-up of fronts
are precisely those which quench large initial data. Fast fronts are
long, the latter being due to \hbox{short time--long distance}
mixing by the underlying flow. Such mixing yields quenching,
although possibly only away from regions where the flow is
relatively still (e.g., the centers of the cells in
Figure~\ref{fig-cell} below). If these regions are sufficiently
small, for instance when the flow is scaled, then reaction cannot
survive inside them and global quenching follows. This relation of
front speed to flow mixing properties also illuminates Corollary
\ref{C.1.2}.

Note that the above assumptions on $u$ exclude the class of {\it
percolating flows} (in particular, {\it shear flows}
$u(x)=(\alpha(x_2,\dots,x_d),0,\dots,0)$) which possess streamlines
connecting $x_1=-\infty$ and $x_1=+\infty$. In two dimensions, the
conclusions of Theorem \ref{T.1.1}(ii) for these flows have been
established in \cite{CKOR,CKR,KR,KZ,RZ}. Moreover, results from
\cite{BHN-2,ZlaPercol} can be used to prove linear pulsating front
speed-up (namely, $\lim_{A\to\infty} c_*(A)/A>0$) by percolating
flows in the presence of KPP reactions in any dimension.

As for cellular flows in two dimensions (the kind we consider here),
the claims about the front speed $c_*(A)$ in Theorem \ref{T.1.1}
have been proved for KPP reactions in \cite{RZ}. The special case of
the flow $u(x)=\nabla^\perp H (x) \equiv (-H_{x_2},H_{x_1})$ with
the {\it stream function} $H(x_1,x_2)=\sin 2\pi x_1 \sin 2\pi x_2$
has been addressed in \cite{FKR,KR,NR}, which proved \eqref{1.4} for
any reaction and quenching by $u^{(l)}$ for small enough $l$ and
ignition reactions. The streamlines of this flow are depicted in
Figure~\ref{fig-cell}.

\begin{figure}[ht!]
 \centerline{\epsfxsize=0.36\hsize \epsfbox{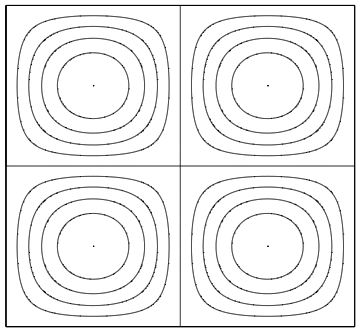}}
 \caption{A cellular flow.}
 \label{fig-cell}
\end{figure}

We note that it is easy to show that \eqref{1.2} has no
$H^1(\bbT^2)$-solutions in this case \cite{RZ}, and so one can
recover these results from Theorem \ref{T.1.1}(ii). Our general
method does not yield the more precise asymptotics $c_*(A)\sim
A^{1/4}$ in the KPP case \cite{NR} and $A^{1/5}\lesssim c_*(A)
\lesssim A^{1/4}$ in the ignition case \cite{KR} for this particular
flow.


We conclude this introduction with two more examples of types of
flows to which Theorem~\ref{T.1.1} applies.

\begin{example} \label{E.1.3} {\bf Checkerboard flows.}
Consider the cellular flow above vanishing in every other cell as
depicted in Figure~\ref{fig-checker}, thus forming a
checkerboard-like pattern. This flow is both periodic (with period
2) and symmetric but it is not $C^{1,\eps}$. Let us remedy this
problem by letting the stream function be $H(x_1,x_2)=(\sin 2\pi x_1
\sin 2\pi x_2)^\alpha$ with $\alpha>2$ in the cells where $u$ does
not vanish. Again, \eqref{1.2} has no
$H^1(2\bbT\times\bbT)$-solutions \cite{RZ}, and so Theorem
\ref{T.1.1}(ii) --- speed-up of fronts and quenching by $u^{(l)}$
--- holds. Moreover, the same conclusion is valid for other flows
with this type of structure, even if the angle of contact of the
``active'' cells is $\pi$.
\end{example}

\begin{figure}[ht!]
 \centerline{\epsfxsize=0.36\hsize \epsfbox{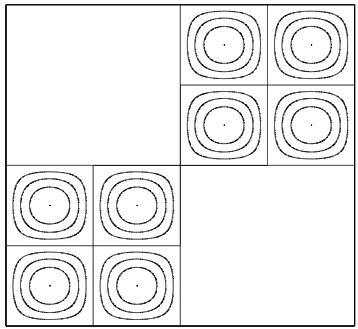}} 
 \caption{A checkerboard cellular flow.}
 \label{fig-checker}
\end{figure}

\begin{example} \label{E.1.4} {\bf Flows with gaps.}
Consider again the cellular flow above but with a vertical ``gap''
of width $\delta>0$, in which the flow vanishes, inserted in place
of each vertical segment $\{k\}\times\bbT$, $k\in\bbZ$, such as
shown in Figure~\ref{fig-gaps}. We again need to alter the stream
function as we did in the previous example in order to make the flow
$C^{1,\eps}$. This time it is easy to see that \eqref{1.2} has
$H^1((1+\delta)\bbT\times\bbT)$-solutions \cite{RZ}, and so Theorem
\ref{T.1.1}(i) --- no speed-up of fronts and no quenching by
$u^{(l)}$
--- holds in this case. The same conclusion is valid for other flows with similar
structures of streamlines, even when the gaps are replaced
by channels in which the flow moves ``along'' the channel only (see
\cite{RZ} for more details).

We also note that Sections \ref{S2} and \ref{S3} below yield the conclusions of Theorem \ref{T.1.1}(i) for cellular flows with gaps in any dimension (using that gaps force Lemma \ref{L.2.2}(ii) to hold).
\end{example}

\begin{figure}[ht!]
 \centerline{\epsfxsize=0.36\hsize \epsfbox{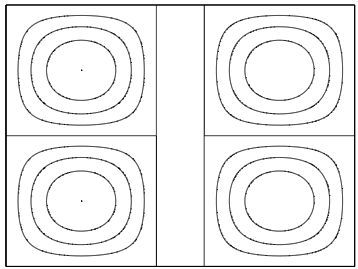}}
 \caption{A cellular flow with gaps.}
 \label{fig-gaps}
\end{figure}

The rest of the paper consists of Section~\ref{S2} where we prove a
few preliminary lemmas, and Sections \ref{S3} and \ref{S4} which
contain the proof of Theorem \ref{T.1.1}.

The author would like to thank Sasha Kiselev, Tom Kurtz, and Greg Lawler for useful
discussions. Partial support by the NSF through the grant
DMS-0632442 is also acknowledged.

\section{Some Preliminaries} \lb{S2}

In this and the next two sections we will assume the hypotheses of
Theorem \ref{T.1.1} with the period $p=1$ --- the general case is
handled identically. This implies that $u$ is symmetric across each
hyperplane $x_1=k$, $k\in\bbZ$. The analysis in this section and the
next applies to \eqref{1.1} on $D=\bbR\times\bbT^{d-1}$ for any
$d\in\bbN$.

Let us consider the stochastic process $X_t^{A,x}$ starting at $x\in
D$ and satisfying the stochastic differential equation
\begin{equation} \lb{2.1}
dX^{A,x}_t=\sqrt{2}\,dB_t - Au(X^{A,x}_t)dt, \qquad X^{A,x}_0=x,
\end{equation}
where $B_t$ is a normalized Brownian motion on $D$. We note that by
Lemma 7.8 in \cite{Oks}, we have that if
\begin{equation} \lb{2.2}
\phi_t + Au(x)\cdot\nabla \phi = \Delta \phi, \qquad
\phi(0,x)=\phi_0(x),
\end{equation}
then
\begin{equation} \lb{2.3}
\phi(t,x) = \bbE \big( \phi_0(X^{A,x}_t) \big).
\end{equation}
In particular, $\phi_0(x)=\chi_{[-L,L]}(x)$ gives
\begin{equation} \lb{2.4}
\phi(t,x) = \bbP \big( |X^{A,x}_t|\le L \big),
\end{equation}
where we define $|x|\equiv |x_1|$ for $x\in D$. Also notice that if
$\phi_0=T_0\in[0,1]$, then by comparison theorems \cite{Sm} for any
$t,x$,
\begin{equation} \lb{2.4a}
0\le T_0(t,x)\le e^{t\|f(s)/s\|_\infty} \phi(t,x) \le
e^{t\|f'\|_\infty} \phi(t,x).
\end{equation}

\begin{lemma} \lb{L.2.1}
\begin{SL}
\item[{\rm{(i)}}] If $k\in\bbZ$ and $y_1=k$ then the distribution of
$X^{A,y}_t$ is symmetric across the hyperplane $x_1=k$, that is,
\[
\bbP(X^{A,y}_t\in V)=\bbP \big(X^{A,y}_t\in R(V-(k,0))+(k,0) \big)
\]
for each $V\subseteq D$.
\item[{\rm{(ii)}}] If $k\in\bbZ$ and $y_1\ge k$, then
for any $I\subseteq\bbR^+$,
\begin{equation} \lb{2.5}
\bbP \big( (X^{A,y}_t)_1 \in k+I \big) \ge \bbP \big( (X^{A,y}_t)_1
\in k-I \big).
\end{equation}
When $y_1\le k$, the inequality in \eqref{2.5} is reversed.
\item[{\rm{(iii)}}] If $L\in\bbN$, then
\begin{equation} \lb{2.6}
\bbP \big( |X^{A,y}_t|\le L \big) \le \left\lceil \frac{|y_1|}L
\right\rceil^{-1}.
\end{equation}
\end{SL}
\end{lemma}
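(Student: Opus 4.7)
The plan is to handle the three parts of the lemma in sequence, each building on the previous. For part (i), the key observation is that the assumed periodicity and symmetry of $u$ imply $u(R_k x)=R_k u(x)$ for every $k\in\bbZ$, where $R_k$ denotes reflection across $x_1=k$. Given $y\in D$ with $y_1=k$, I would set $\ti X_t:=R_k X^{A,y}_t$ and note that $\ti B_t:=R_k(B_t-y)+y$ is again a Brownian motion on $D$ starting at $R_k y=y$. Applying $R_k$ to \eqref{2.1} and using $R_k u(x)=u(R_k x)$ gives $d\ti X_t=\sqrt{2}\,d\ti B_t-Au(\ti X_t)\,dt$ with $\ti X_0=y$, i.e., $\ti X_t$ solves the same SDE as $X^{A,y}_t$. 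Since $u\in C^{1,\eps}$ yields pathwise uniqueness, the two processes have the same law, which is precisely (i).

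For part (ii), I would use the strong Markov property at $\tau_k:=\inf\{s\ge 0:(X^{A,y}_s)_1=k\}$. On $\{\tau_k\le t\}$, conditionally on $X^{A,y}_{\tau_k}=z$ (which has $z_1=k$), the distribution of $X^{A,y}_t$ coincides with that of $X^{A,z}_{t-\tau_k}$, which by (i) is symmetric across $x_1=k$. Hence
\[
\bbP\big((X^{A,y}_t)_1\in k+I,\,\tau_k\le t\big)=\bbP\big((X^{A,y}_t)_1\in k-I,\,\tau_k\le t\big).
\]
On the complementary event $\{\tau_k>t\}$, path continuity together with $y_1\ge k$ forces $(X^{A,y}_t)_1>k$, so the process cannot lie in $k-I\subseteq(-\infty,k)$ there. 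Combining the two cases gives \eqref{2.5}; the reversed inequality for $y_1\le k$ follows by the same argument with the roles of $k+I$ and $k-I$ interchanged.

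Part (iii) is the one I expect to be the main technical step, since it requires the right combinatorial use of (ii). The idea is to produce $\lfloor y_1/L\rfloor$ pairwise disjoint events, each of probability at least $\bbP(|X^{A,y}_t|\le L)$, and then bound their sum by $1$. By the symmetry $y\mapsto R_0 y$ (argued as in (i)), we may assume $y_1\ge 0$. For each integer $j$ with $1\le j\le \lfloor y_1/L\rfloor$, the point $k=jL$ lies in $\bbZ$ because $L\in\bbN$, and $y_1\ge k$, so applying (ii) with $I=((j-1)L,(j+1)L]\subseteq\bbR^+$, for which a direct calculation gives $k+I=((2j-1)L,(2j+1)L]$ and $k-I=[-L,L)$, yields
\[
\bbP\big((X^{A,y}_t)_1\in((2j-1)L,(2j+1)L]\big)\ge \bbP\big((X^{A,y}_t)_1\in[-L,L)\big).
\]
The intervals $[-L,L),(L,3L],(3L,5L],\dots,((2\lfloor y_1/L\rfloor-1)L,(2\lfloor y_1/L\rfloor+1)L]$ are pairwise disjoint, so summing the $1+\lfloor y_1/L\rfloor$ corresponding probabilities gives
\[
\big(1+\lfloor y_1/L\rfloor\big)\,\bbP\big((X^{A,y}_t)_1\in[-L,L)\big)\le 1.
\]
Since $1+\lfloor y_1/L\rfloor\ge\lceil y_1/L\rceil$ in both the integer and non-integer cases of $y_1/L$, and $(X^{A,y}_t)_1$ has a continuous density so that the point $\{(X^{A,y}_t)_1=L\}$ carries zero mass, this yields \eqref{2.6}.
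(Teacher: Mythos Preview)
Your argument is correct and follows exactly the route the paper takes: parts (i) and (ii) are what the paper dismisses as ``obvious from the symmetry of $u$ across $x_1=k$ and from almost sure continuity of $X_t^{A,y}$,'' and your treatment of (iii) via the comparison \eqref{2.5} at $k=jL$ to produce disjoint intervals of at least the target probability is precisely the paper's proof, only with half-open intervals and $\lfloor y_1/L\rfloor$ in place of closed intervals and $\lceil y_1/L\rceil-1$. The one cosmetic slip is that $k-I\subseteq(-\infty,k]$ rather than $(-\infty,k)$ if $0\in I$, but since $(X^{A,y}_t)_1>k$ strictly on $\{\tau_k>t\}$ this changes nothing.
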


\begin{proof}
(i) and (ii) are obvious from the symmetry of $u$ across $x_1=k$ and
from almost sure continuity of $X_t^{A,y}$ in $t$. To show (iii), it
is sufficient to consider $y_1>L$. Applying (ii) with $k=jL$ for
$j=1,\dots,\lceil y_1/L\rceil-1$, we see that
\[
\bbP \big( (X^{A,y}_t)_1\in [-L,L] \big) \le \bbP \big(
(X^{A,y}_t)_1\in [(2j-1)L,(2j+1)L] \big).
\]
The claim follows.
\end{proof}

Next we prove the following key dichotomy.


\begin{lemma} \lb{L.2.2}
For any sequence $\{A_n\}_{n=1}^\infty$ one of the following holds.
\begin{SL}
\item[{\rm{(i)}}] For any $t,\eps>0$ and $L<\infty$ there are
$x,n$ such that
\begin{equation} \lb{2.7}
\bbP \big(|X^{A_n,x}_t-x|\le L \big) < \eps.
\end{equation}
\item[{\rm{(ii)}}] For any $t,\eps>0$ there is $L<\infty$ such that
for any $x,n$,
\begin{equation} \lb{2.8}
\bbP \big(|X^{A_n,x}_t-x|\le L \big) > 1- \eps.
\end{equation}
\end{SL}
\end{lemma}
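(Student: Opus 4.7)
The statement is a dichotomy. First note that (i) and (ii) are mutually exclusive: for a common $t$ and $\eps < 1/2$, (ii) supplies an $L_0$ with $\bbP(|X^{A_n, x}_t - x| \le L_0) > 1-\eps$ uniformly in $(x, n)$, contradicting the existence required by (i) of some $(x, n)$ with this probability below $\eps$ at the same $L_0$. It thus suffices to derive (i) from the failure of (ii).

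The failure of (ii) yields $t_0, \eps_0 > 0$ such that for each $M \in \bbN$ there is a pair $(x_M, n_M)$ with $\bbP(|X^{A_{n_M}, x_M}_{t_0} - x_M| > M) \ge \eps_0$. Given $(t,\eps,L)$ from (i), with $L \in \bbN$ WLOG, I would set $(x, n) := (x_M, n_M)$ for $M \gg L$ and argue via the strong Markov property. Conditioning on $Y := X^{A_{n_M}, x_M}_{t_0}$ and using the $x_1$-periodicity of $u$ to arrange $(x_M)_1 \in [0, 1)$, the target interval $[(x_M)_1 - L, (x_M)_1 + L]$ is contained in the symmetric interval $[-(L+1), L+1]$, so Lemma \ref{L.2.1}(iii) applied with $L+1 \in \bbN$ gives
\begin{equation*}
\bbP(|X^{A_{n_M}, Y}_{t - t_0} - x_M| \le L \mid Y) \le \bigl\lceil |Y_1|/(L+1) \bigr\rceil^{-1},
\end{equation*}
which on the escape event $\{|Y - x_M| > M\}$ (probability $\ge \eps_0$) is at most $(L+1)/(M-1)$. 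Unconditioning,
\begin{equation*}
\bbP(|X^{A_{n_M}, x_M}_t - x_M| \le L) \le (1-\eps_0) + \eps_0 \frac{L+1}{M-1} \xrightarrow{M \to \infty} 1 - \eps_0,
\end{equation*}
which is bounded away from $1$ but not below $\eps$.

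To amplify the bound below $\eps$, the plan is to iterate in a larger time horizon $t = k t_0$ with $k$ large and repeatedly apply the single-block escape argument. Using the strong Markov property at each $j t_0$, each block contributes a fresh escape-or-not event. The subtle point is that the fresh escape must be available from the random position $Y_j := X^{A_{n_M}, x_M}_{j t_0}$, not just from the pre-selected $x_M$; I would bridge this by combining continuity of the escape-probability function on the compact torus $\bbT^d$ (giving an open positive-measure set of escape-favorable starting positions) with ergodicity of the projected diffusion on $\bbT^d$ (valid since $u$ is divergence-free so Lebesgue is invariant), ensuring that the process visits the favorable set in each block. Lemma \ref{L.2.1}(iii) controls any returns once an escape has occurred, and the chained confinement probability decays like $(1-c\eps_0)^k + o(1) < \eps$.

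The main obstacle is the iteration step, specifically chaining together the per-block escape events multiplicatively while accounting for the non-i.i.d.\ nature of the process. The difficulty lies in simultaneously using (a) the continuity/ergodicity input on the torus $\bbT^d$ to ensure the escape hypothesis applies at random later times, and (b) Lemma \ref{L.2.1}(iii) to prevent the escaped process from returning to $B_L(x_M)$. Coordinating these two ingredients to extract an effective per-block escape probability of order $\eps_0$, uniformly in the choice of amplitude $A_{n_M}$, is the technical crux.
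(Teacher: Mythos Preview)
Your route---negate (ii) and drive toward (i) by iterating the single-block escape---runs into a genuine obstruction that you flag but do not resolve. The negation of (ii) yields only a \emph{fixed} escape probability $\eps_0$ at a \emph{fixed} time $t_0$, and only from special points $x_M$. Your one-step bound $\bbP(|X^{A_{n_M},x_M}_t-x_M|\le L)\le 1-\eps_0+o_M(1)$ is correct for $t\ge t_0$, but to push below an arbitrary $\eps$ you need many iterations $k$; since $k=\lfloor t/t_0\rfloor$ is fixed by the \emph{given} $t$, you cannot take $k$ large, and you never say how to treat $t<t_0$. Even for $t$ a large multiple of $t_0$, your ergodicity patch introduces mixing constants and good-set measures that depend on $A_{n_M}$; as $n_M$ varies with $M$ there is no uniform per-block gain, so the chaining does not close. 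And ergodicity delivers visits to the favorable set on average over long times, not at the specific block-boundary times $jt_0$ your Markov iteration requires.

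The paper sidesteps all of this by a \emph{different} dichotomy: either (A) there is some time $t'$ at which confinement can already be made \emph{arbitrarily} small for suitable $x,n$, or (B) at \emph{every} time there is a positive lower bound on confinement that is \emph{uniform in $x$ and $n$}. Case (A)$\Rightarrow$(i) is then easy via Lemma~\ref{L.2.1}(iii) for $t\ge t'$ and a contrapositive iteration for $t<t'$. The substance lies in (B)$\Rightarrow$(ii): setting $\eps_0(t)=\sup_L\inf_{x,n}\bbP(|X^{A_n,x}_t-x|\le L)$, one bootstraps $\eps_0(t)\equiv 1$ by noting that if $\eps_0(t)<1$, the process crosses level $(m+1)L$ before time $t$ with probability $\approx 1-\eps_0(t)$, and from there (crucially using the uniformity in $x$ furnished by (B)) lands in $[L,(2m+1)L]$ with probability $\gtrsim\eps_0(t)$; adding the $\approx\eps_0(t)$ probability of staying in $[-L,L]$ forces the total confinement in $[-(2m+1)L,(2m+1)L]$ strictly above $\eps_0(t)$, a contradiction. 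No ergodicity and no $A$-dependent constants enter.
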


\begin{proof}
Let us first assume that there is $t'>0$ such that for any $\eps'>0$
and $L'<\infty$ there are $x$, $n$ such that
\begin{equation} \lb{2.9}
\bbP \big(|X^{A_n,x}_{t'}-x|\le L' \big) < \eps'.
\end{equation}
Given any $\eps>0$, $L\in\bbN$, let $m>2/\eps$ be an integer and let
 $x,n$ be as in \eqref{2.9} with $\eps'=1/m$, $L'=(2m+1)L$. Notice that by
periodicity of $u$ we can assume $|x_1|\le 1$. For any $t\ge t'$ we
have
\[
\bbP \big( |X^{A_n,x}_t-x|\le L \big)  \le \bbP \big(
|X^{A_n,x}_t|\le 2L \big) \le \bbP \big(|X^{A_n,x}_{t'}|\le 2mL
\big) + \sup_{|y|\ge 2mL}\bbP \big(|X^{A_n,y}_{t-t'}|\le 2L\big).
\]
The first term is smaller than $\eps'<\eps/2$ by \eqref{2.9} and the
second is at most $1/m<\eps/2$ by \eqref{2.6}. This yields (i) for
$t\ge t'$. On the other hand, if (i) does not hold for some
$t\in(0,t')$, then there are $\eps,L$ such that for all $x,n$,
\[
\bbP \big(|X^{A_n,x}_{t}-x|\le L \big) \ge \eps.
\]
Choose $m\in\bbN$ so that $mt\ge t'$. It follows that
\[
\bbP \big(|X^{A_n,x}_{mt}-x|\le mL \big) \ge \eps^m
\]
for all $x,n$. But this contradicts (i) for $mt$, which has just
been proven. Therefore (i) holds for all $t>0$ under the hypothesis
above.

Now assume the opposite case to the one above. Namely, that for each
$t'>0$ there are $\eps'>0$ and $L'<\infty$ such that for all $x$,
$n$,
\begin{equation} \lb{2.10}
\bbP \big(|X^{A_n,x}_{t'}-x|\le L' \big) \ge \eps'.
\end{equation}
We will show that then (ii) holds, thus finishing the proof.

For each $t>0$ let
\[
\eps_0(t)  \equiv \sup_{L} \inf_{x,n} \bbP \big(|X^{A_n,x}_{t}-x|\le
L \big) >0
\]
Periodicity of $u$ guaranties that
\[
\eps_0(t) =\sup_{L\in\bbN} \inf_{|x|\le 1,n} \bbP
\big(|X^{A_n,x}_{t}|\le L \big) \,\, \big(\equiv \sup_{L\in\bbN}
\eps_1(t,L) \big).
\]
Notice that $\eps_0(t)$ is non-increasing. Indeed,
for $L,m\in\bbN$ and  $t\ge t'$,
\begin{align} \lb{2.11}
\eps_1(t,L) & \le \eps_1(t',mL) + \frac 1m
\end{align}
by \eqref{2.6}, and so $\eps_0(t)\le \eps_0(t') + 1/m$ for any $m$.

We will now show that $\eps_0(t)=1$ for all $t$. To this end assume
$\eps_0(t)<1$ for some $t$. Let $m$ be large (to be chosen later),
and let $L$ be such that
\begin{align} \lb{2.12}
\eps_1(t,L)>\eps_0(t) - \frac 1m
\end{align}
Consider any $|x|\le 1$, $n$ such that
\begin{align} \lb{2.13}
\bbP \big(|X^{A_n,x}_{t}|\le (2m+1)L \big) \le \eps_0(t)+\frac 1m.
\end{align}
Such $x,n$ do exists because of $\eps_0(t)\ge\eps_1(t,(2m+1)L)$.
Then the set of Brownian paths for which there is $t'\in[0,t]$ such
that $|X^{A_n,x}_{t-t'}|=(m+1)L$ has measure at least
$1-\eps_0(t)-1/m$. Since
\begin{align*}
\bbP \big(|X^{A_n,x}_t| \in [L,(2m+1)L] \,\big|\, &
|X^{A_n,x}_{t-t'}| =(m+1)L \text{ for some $t'\in[0,t]$} \big)
\\ & \ge \inf_{t'\in[0,t]} \eps_1(t',mL)
> \eps_0(t) - \frac 2m
\end{align*}
by \eqref{2.11} and \eqref{2.12}, this means
\begin{align*}
\bbP \big(|X^{A_n,x}_t|\le (2m+1)L \big) & = \bbP
\big(|X^{A_n,x}_t|\le L \big) + \bbP \big(|X^{A_n,x}_t|\in
[L,(2m+1)L] \big)
\\ &\ge \eps_1(t,L) +
\bigg(1-\eps_0(t)-\frac 1m \bigg) \bigg(\eps_0(t) - \frac 2m \bigg)
\\ & \ge \bigg(2-\eps_0(t)-\frac 1m \bigg) \bigg(\eps_0(t) - \frac 2m \bigg).
\end{align*}
Since $\eps_0(t)<1$, this is larger than $\eps_0(t)+1/m$ when $m$ is
large enough. This, however, contradicts \eqref{2.13}. Therefore we
must have $\eps_0(t)=1$ for all $t$, which is (ii).
\end{proof}

We will also need the following result which is essentially from
\cite{CKRZ}.

\begin{lemma} \lb{L.2.3}
For any $d\in\bbN$, there is $c>0$ such that for any Lipschitz
incompressible flow $u$, any $A$, and any $t\ge 0$, the solution
$\phi$ of \eqref{2.2} on $\Omega\equiv [0,1]\times\bbT^{d-1}$ with
Dirichlet boundary conditions on $\partial\Omega$ satisfies
\begin{equation} \lb{2.14}
\|\phi(t,\cdot)\|_\infty \le 2e^{-ct} \|\phi_0\|_\infty.
\end{equation}
\end{lemma}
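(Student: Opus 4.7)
My plan combines three ingredients whose constants are independent of the flow $u$ and the amplitude $A$: an $L^2$ energy estimate exploiting incompressibility, a Nash-type $L^2\to L^\infty$ smoothing bound for the Dirichlet semigroup generated by $\Delta - Au\cdot\nabla$, and the parabolic maximum principle for short times. For the $L^2$ bound, I would multiply \eqref{2.2} by $\phi$ and integrate over $\Omega$; by $\nabla\cdot u = 0$ and $\phi|_{\partial\Omega}=0$ the drift term vanishes,
\begin{equation*}
A\int_\Omega (u\cdot\nabla\phi)\phi\,dx = \tfrac{A}{2}\int_\Omega u\cdot\nabla(\phi^2)\,dx = 0,
\end{equation*}
giving $\tfrac{d}{dt}\|\phi(t,\cdot)\|_2^2 = -2\|\nabla\phi(t,\cdot)\|_2^2$. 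Since the Dirichlet Poincar\'e constant on $\Omega = [0,1]\times\bbT^{d-1}$ is $\pi^2$, Gronwall yields $\|\phi(t,\cdot)\|_2 \le e^{-\pi^2 t}\|\phi_0\|_2$ uniformly in $A$ and $u$.

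To upgrade this to $L^\infty$ decay, I would pass through the adjoint semigroup, whose generator $\Delta + Au\cdot\nabla$ again has divergence-free drift and Dirichlet data. Letting $G_A$ denote the Dirichlet kernel of \eqref{2.2}, the semigroup property and Cauchy--Schwarz give
\begin{equation*}
|\phi(t,x)| = \Big|\int_\Omega G_A(t/2,x,y)\phi(t/2,y)\,dy\Big| \le \|G_A(t/2,x,\cdot)\|_2\,\|\phi(t/2,\cdot)\|_2.
\end{equation*}
For fixed $x$, the function $y\mapsto G_A(s,x,y)$ is the (regularized) adjoint evolution of $\delta_x$. The Nash inequality $\|f\|_2^{2+4/d}\le C_d\|\nabla f\|_2^2\|f\|_1^{4/d}$ on $H^1_0(\Omega)$, together with the $L^1$-contractivity of the adjoint semigroup --- immediate from positivity of the kernel and $\int_\Omega G_A^*(s,x,y)\,dy\le 1$, both consequences of the maximum principle --- yields the standard Nash-type ODE inequality for $s\mapsto \|G_A(s,x,\cdot)\|_2^2$ whose integration gives $\|G_A(s,x,\cdot)\|_2 \le C_d s^{-d/4}$ for $s\in(0,1]$, with $C_d$ depending only on $d$. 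The critical point is that divergence-freeness survives adjoining, so no constant in this step sees $A$ or $\|u\|_\infty$.

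Combining at $s = 1$ gives, for $t\ge 1$,
\begin{equation*}
\|\phi(t,\cdot)\|_\infty \le C_d\|\phi(t-1,\cdot)\|_2 \le C_d|\Omega|^{1/2}e^{\pi^2}e^{-\pi^2 t}\|\phi_0\|_\infty = K_d e^{-\pi^2 t}\|\phi_0\|_\infty
\end{equation*}
with $K_d$ depending only on $d$, while the parabolic maximum principle gives $\|\phi(t,\cdot)\|_\infty \le \|\phi_0\|_\infty$ for $t\in[0,1]$. A straightforward optimization --- equating the two bounds at the threshold $T = \log(K_d)/\pi^2$ and taking $c = \pi^2\log 2/\log K_d$ --- shows that both estimates are bounded by $2e^{-ct}\|\phi_0\|_\infty$, which is \eqref{2.14}. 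I expect the main obstacle to be the rigorous execution of the Nash step with constants independent of $A$ and $u$; this hinges entirely on the divergence-free hypothesis cancelling the drift in the energy identity for both the forward and the adjoint equations, so any attempt to relax that hypothesis would immediately break the argument.
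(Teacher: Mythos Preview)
Your proof is correct and follows essentially the same route as the paper's, which reduces via the maximum principle to a halving estimate $\|\phi(\tau,\cdot)\|_\infty\le\tfrac12\|\phi_0\|_\infty$ at a fixed time $\tau$ and then cites Lemma~5.6 of \cite{CKRZ}; the Poincar\'e-driven $L^2$ decay combined with Nash-type $L^2\to L^\infty$ smoothing that you spell out is precisely the mechanism behind that cited lemma, adapted from mean-zero data on $\bbT^d$ to Dirichlet data on $[0,1]\times\bbT^{d-1}$. Your version has the advantage of being self-contained and of making explicit that the sole role of incompressibility is to kill the drift in the energy identities for both the forward and adjoint semigroups, which is exactly what makes every constant independent of $A$ and $u$.
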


\begin{proof}
The maximum principle implies that it is sufficient to show that
there is $\tau>0$ such that
\[
\|\phi(\tau,\cdot)\|_\infty \le \frac 12 \|\phi_0\|_\infty.
\]
uniformly in $u$ and $A$. For incompressible flows on $\bbT^d$ and
mean-zero $\phi_0$ this follows from Lemma 5.6 in \cite{CKRZ}. The
proof extends without change to our case, the Dirichlet boundary
condition replacing the mean-zero assumption when the Poincar\'e
inequality is used.
\end{proof}

\section{Proof of Theorem \ref{T.1.1}: Part I} \lb{S3}

Let us now assume that $u$ and $f$ are as in Theorem \ref{T.1.1} and
$A_n\to\infty$ is such that Lemma \ref{L.2.2}(ii) holds. We will
then show that the minimal front speeds $c_*(A_n)$ are uniformly
bounded and the flows $A_nu$ do not quench large enough compactly
supported initial data $T_0$ for \eqref{1.1}. The analysis in this
section applies to $D=\bbR\times\bbT^{d-1}$ for any $d\in\bbN$.

\begin{lemma} \lb{L.3.1}
Consider the setting of Theorem \ref{T.1.1} with
$D=\bbR\times\bbT^{d-1}$, and let $A_n\to\infty$ be such that Lemma
\ref{L.2.2}(ii) holds. Then $c_*(A_n)$ are uniformly bounded above.
\end{lemma}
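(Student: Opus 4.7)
The plan is to bound $c_*(A_n)$ uniformly by controlling the spreading speed of the solution of \eqref{1.1} with front-like initial data $T_0\le\chi_{\{x_1\le 0\}}$, via the Feynman--Kac representation \eqref{2.3}--\eqref{2.4a} combined with the uniform process confinement from Lemma~\ref{L.2.2}(ii). By standard results (Weinberger, Berestycki--Hamel--Nadin), $c_*(A_n)$ is bounded above by this spreading speed, so it suffices to produce $C<\infty$ independent of $n$ such that $T(t,x)\le\tfrac12$ whenever $x_1\ge Ct$ and $t\ge 1$.

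From \eqref{2.4a} we have $T(t,x)\le e^{Kt}\,\bbP((X_t^{A_n,x})_1\le 0)$ with $K:=\|f(s)/s\|_\infty<\infty$. Fixing a small $t_0>0$ and setting $\eps:=e^{-Kt_0}/4$, Lemma~\ref{L.2.2}(ii) supplies a distance $L_0:=L(t_0,\eps)<\infty$, uniform in $x$ and $n$, such that $\bbP(|X_{t_0}^{A_n,x}-x|>L_0)<\eps$. For $x_1\ge L_0$ this forces $\bbP((X_{t_0}^{A_n,x})_1\le 0)\le\bbP(|X_{t_0}^{A_n,x}-x|>L_0)<\eps$, and hence $T(t_0,x)\le 1/4$ on the half-space $\{x_1\ge L_0\}$.

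The next step is to iterate this estimate over consecutive time intervals of length $t_0$. Using the Markov property of $X_t^{A_n,\cdot}$ and applying Lemma~\ref{L.2.2}(ii) at each step to the current state, I expect to show inductively that $T(mt_0,x)\le 1/4$ whenever $x_1\ge mL_0$, which yields the uniform bound $c_*(A_n)\le L_0/t_0$ independent of $n$.

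The main obstacle is carrying through this iteration cleanly. A naive Feynman--Kac iteration would compound the $e^{Kt_0}$ factor at every step, forcing $\eps$ to shrink and $L(t_0,\eps)$ to blow up. For ignition reactions ($\theta_0>0$) I would resolve this by choosing $1/4<\theta_0$, so that once $T<1/4$ in the half-space $\{x_1\ge L_0\}$ the reaction vanishes there and the subsequent dynamics linearize, preventing exponential compounding; the induction then closes with the purely linear step being handled by another application of Lemma~\ref{L.2.2}(ii). For positive reactions ($\theta_0=0$), the cell-level exponential decay from Lemma~\ref{L.2.3} is the key substitute: applied to cell-sized Dirichlet boxes covering $\{x_1\ge mL_0\}$, it furnishes $u$- and $A$-independent decay that absorbs the $e^{Kt_0}$ factors through a convergent Duhamel-type series, completing the uniform bound on $c_*(A_n)$.
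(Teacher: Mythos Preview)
Your overall strategy—bound $T$ by $e^{Kt}\phi$ via \eqref{2.4a} and then control $\phi$ through the confinement in Lemma~\ref{L.2.2}(ii)—matches the paper's starting point. The gap is in the iteration, and neither proposed fix closes it.

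For ignition, the assertion that ``the reaction vanishes there and the subsequent dynamics linearize'' is not justified: the half-space $\{x_1\ge L_0\}$ is fed through its boundary by $\{x_1<L_0\}$, where $T$ may be near $1$ and the reaction active. Even if you compare with the linear problem on $\{x_1>mL_0\}$ with boundary value $1$ while $T\le\theta_0$ there, each step contributes a fixed amount from the boundary-hitting probability, so the running bound drifts upward and exceeds $\theta_0$ after finitely many steps; near $x_1=mL_0$ the linear comparison itself exceeds $\theta_0$ almost immediately, so the ``linearized'' regime need not persist on the whole half-space. Put differently, iterating the one-step confinement directly gives at best
\[
\bbP\big(|X^{A_n,x}_{mt_0}-x|>mL_0\big)\le m\eps,
\]
linear in $m$, whereas absorbing $e^{Kmt_0}$ requires decay exponential in $m$. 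For positive reactions, Lemma~\ref{L.2.3} concerns the Dirichlet problem on a single cell; it gives no direct control on the linear evolution of half-space data $\chi_{\{x_1<0\}}$, and the ``convergent Duhamel-type series'' is not made precise.

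The missing ingredient is a genuine large-deviations bound, exponential in $t$, for the displacement of $X^{A_n,x}_t$. The paper obtains it via stopping times: with $L$ from Lemma~\ref{L.2.2}(ii) for $t=1$, $\eps=\tfrac14$, let $\tau_j$ be the successive times at which the process has moved a further distance $3L$. Lemma~\ref{L.2.2}(ii) combined with Lemma~\ref{L.2.1}(iii) forces $\bbP(\tau_j-\tau_{j-1}\le 1)\le\tfrac12$, so
\[
\bbP\big(|X^{A_n,x}_t-x|\ge 3LCt\big)\le\bbP(\tau_{Ct}\le t)\le\kappa(C)^t,\qquad \kappa(C)\to 0 \text{ as } C\to\infty,
\]
by a binomial tail estimate. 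Choosing $C$ with $\kappa(C)e^{\|f'\|_\infty}<1$ then gives $T(t,x(3LCt))\to 0$ in one stroke, hence $c_*(A_n)\le 3LC$ uniformly in $n$, with no case split between ignition and positive reactions.
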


\begin{proof}
Choose $L\in\bbN$ that satisfies Lemma \ref{L.2.2}(ii) for $t=1$ and
$\eps=\tfrac 14$. Let $x$ be such that $x_1\in \bbZ$ and consider
$X_t^{A_n,x}$ from \eqref{2.1}. Take $\tau_0=0$ and let $\tau_j$ be
the first time such that
$|X_{\tau_j}^{A_n,x}-X_{\tau_{j-1}}^{A_n,x}|=3L$ (recall that
$|x|=|x_1|$). We then have from \eqref{2.8} and \eqref{2.6},
\[
\bbP(\tau_j-\tau_{j-1}\le 1)\le\frac 12
\]
because $\tfrac 13 p + (1-p)\ge \tfrac 34$ implies $p\le \tfrac 12$.
This means that for any large enough $C,t\in\bbN$,
\begin{align*}
\bbP(|X_t^{A_n,x}-x|\ge 3LCt)\le \bbP(\tau_{Ct}\le t) & \le
\sum_{j=0}^{t-1} {Ct \choose j} \bigg(\frac{1}{2}\bigg)^{Ct-j} \le
{Ct \choose t} \frac {t}{2^{(C-1)t}} \\ & \le \bigg(
\frac{(5/4)^{C-1}C^C}{2^{C-1}(C-1)^{C-1}} \bigg)^t \le \kappa(C)^t
\end{align*}
with $\kappa(C)\equiv 2Ce(2/3)^C\to 0$ as $C\to\infty$. We used here
the fact that fewer than $t$ of the differences $\tau_j-\tau_{j-1}$
can exceed 1 in the second inequality, and Stirling's formula in the
fourth.

Let now $T$ be the solution of \eqref{1.1} with $A=A_n$ and
$T_0\equiv \chi_{\bbR^-\times\bbT^{d-1}}$. If $\phi$ solves
\eqref{2.2} with $A=A_n$ and $\phi_0\equiv T_0$, then we have by \eqref{2.4a} for
$x(s)\equiv (s,0,\dots,0)$,
\[
T(t,x(3LCt))\le e^{t\|f'\|_\infty}\phi(t,x(3LCt)) \le
e^{t\|f'\|_\infty} \bbP(|X_t^{A_n,x(3LCt)}-x(3LCt)|\ge 3LCt) \to 0
\]
as $t\to\infty$, provided $C$ is large enough. On the other hand, it
is well known that $T(t,x(ct))\to 1$ as $t\to\infty$ when
$c<c_*(A_n)$ \cite{BHN-1,Weinberger,Xin}. This means $c_*(A_n)\le
3LC$ and we are done.
\end{proof}

\begin{lemma} \lb{L.3.2}
Consider the setting of Theorem \ref{T.1.1} with
$D=\bbR\times\bbT^{d-1}$, and let $A_n\to\infty$ be such that Lemma
\ref{L.2.2}(ii) holds. Then there is compactly supported
$T_0(x)\in[0,1]$ such that the solution $T$ of \eqref{1.1} with
$A=A_n$ does not quench for any $n$.
\end{lemma}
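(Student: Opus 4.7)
I plan to take $T_0 = \chi_{[-M, M] \times \bbT^{d-1}}$ for $M$ large enough (chosen below, but uniformly in $n$), and show that the corresponding solution $T$ does not quench. The strategy combines a uniform initial-time lower bound coming from Lemma~\ref{L.2.2}(ii) with a sub-solution built from the pulsating fronts, whose speeds are controlled by Lemma~\ref{L.3.1}.

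First, I would fix $\eps_0 \in (0, (1 - \theta_0)/2)$ and apply Lemma~\ref{L.2.2}(ii) with $t = 1$ and $\eps = \eps_0$ to obtain $L > 0$ such that $\bbP(|X^{A_n, x}_1 - x| \le L) > 1 - \eps_0$ for all $x \in D$ and all $n$. The inequality $T \ge \phi$ (from \eqref{2.4a} together with $f \ge 0$) and \eqref{2.4} then give $T(1, x) \ge \bbP(X^{A_n, x}_1 \in [-M, M] \times \bbT^{d-1}) \ge 1 - \eps_0$ whenever $|x_1| \le M - L$. Since $1 - \eps_0 > \theta_0$, the solution at time $t = 1$ exceeds the ignition threshold on a strip of $x_1$-width $2(M-L)$, uniformly in $n$.

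Next, by Lemma~\ref{L.3.1} we have $c_*(A_n) \le C_0$ uniformly. For ignition reactions the pulsating fronts $U_n$ exist, are unique up to translation, are monotone decreasing in the first argument, and have uniformly bounded $s$-width of their transition zones (by standard theory, given the uniform speed bound). I would build a sub-solution
\[
V_n(t, x) = \min\bigl\{U_n\bigl(x_1 - c_*(A_n)(t-1) + s_0,\, x\bigr),\; U_n\bigl(-x_1 - c_*(A_n)(t-1) + s_0,\, Rx\bigr)\bigr\},
\]
where $s_0 > 0$ is chosen so that $U_n(s_0, \cdot) \le 1 - \eps_0$ uniformly in $n$. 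For $M$ large enough (uniformly in $n$), monotonicity of $U_n$ shows $V_n(1, x) \le 1 - \eps_0 \le T(1, x)$ on $\{|x_1| \le M - L\}$, and by careful choice of $s_0$ one arranges the pointwise bound $V_n(1, \cdot) \le T(1, \cdot)$ everywhere on $D$. Since the minimum of two solutions of \eqref{1.1} is a viscosity sub-solution, the comparison principle yields $T(t, \cdot) \ge V_n(t, \cdot)$ for all $t \ge 1$. Both components of $V_n$ are pulsating fronts moving \emph{outward} from the origin at speed $c_*(A_n) \ge 0$, so the region where $V_n \ge 1 - \eps_0$ only expands; in particular $V_n(t, 0) \ge 1 - \eps_0$ for all $t \ge 1$, giving $T(t, 0) \ge 1 - \eps_0 > \theta_0$ for all $t$.

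The main obstacle is verifying $V_n(1, \cdot) \le T(1, \cdot)$ on the complement of the central strip $\{|x_1| \le M - L\}$, where we only know $T(1, \cdot) \ge 0$. The fronts $U_n$ have (only) exponential decay as $s \to \infty$, while the tail of $\phi(1, x) = \bbP(X^{A_n,x}_1 \in [-M,M]\times\bbT^{d-1})$ behaves like a Gaussian for $|x_1| \gg M$, so a naive pointwise comparison can fail far from the origin. A robust fix is to replace $V_n$ by a compactly supported stationary sub-solution on the cylinder $[-M, M] \times \bbT^{d-1}$ with Dirichlet boundary conditions, extended by zero; the existence of such a sub-solution uniformly in $n$ reduces to an upper bound, uniform in $n$, on the principal Dirichlet eigenvalue of $-\Delta + A_n u \cdot \nabla$ on the cylinder, and this upper bound can be extracted from Lemma~\ref{L.2.2}(ii) by estimating expected exit times of $X^{A_n, \cdot}$ from the box, in the same spirit as the iteration used in the proof of Lemma~\ref{L.3.1}.
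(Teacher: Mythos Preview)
Your route is genuinely different from the paper's, and the gap you flag is real and not repaired by what you sketch. The paper never touches pulsating fronts here: it chooses a smooth profile $T_0(x)=\psi_0(x_1/M)$ with $\psi_0$ convex where $f(\psi_0)=0$, and proves the single inequality $T(\tau,\cdot)\ge T_0$ for $\tau=M^{3/2}$. The linear evolution $\phi$ is controlled by two probabilistic facts extracted from Lemma~\ref{L.2.2}(ii) and the symmetry: a sub-diffusive tail bound $\bbP(|X^{A_n,x}_t-x|\ge t^{8/15})\le e^{-t^\delta}$ (obtained by recognizing, as in Lemma~\ref{L.3.1}, that $(X^{A_n,x}_{\tau_j})_1$ is a simple random walk with step $3L$), and an $A$-independent anti-concentration bound $\bbP((X^{A,x}_t)_1\in[m,m+1])\le \tilde c\,t^{-1/2}$. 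Convexity of $\psi_0$ then gives $\phi(\tau,x)\ge\phi_0(x)+c'M^{-1/2}$ in the outer region; in the inner region, where $\phi_0>\tht_0$, an ODE comparison $\tilde T(t,x)=w(t,\phi(t,x))$ with convex $w$ converts the small loss $\phi(\tau,x)\ge\phi_0(x)-c'M^{-1/2}$ into a gain from the reaction. No front profiles, no matching of tails.

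In your outline, several ``uniform in $n$'' assertions about the fronts $U_n$ are doing real work without justification: a uniform transition width, a uniform choice of $s_0$ with $U_n(s_0,\cdot)\le 1-\eps_0$, and a uniform lower bound on $c_*(A_n)$ (needed to control the exponential tail of $U_n$). None of these is ``standard theory'' once $A$ varies; compare the effort in Lemma~\ref{L.4.1} to get just \eqref{4.5} in the complementary regime. Your fallback to a stationary Dirichlet sub-solution is more promising, but two nontrivial steps are missing. First, an expected-exit-time lower bound does not by itself bound the principal eigenvalue $\lambda_n(M)$ from above; you need a lower bound on $\bbP_x(\tau_{\rm exit}>t)$ for large $t$, which does follow from the random-walk structure of $(X^{A_n,x}_{\tau_j})_1$ but requires a separate argument beyond the iteration in Lemma~\ref{L.3.1}. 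Second, for an ignition $f$ the eigenfunction of the non-self-adjoint operator is \emph{not} a sub-solution (since $f(\varphi)/\varphi$ vanishes on $\{\varphi\le\tht_0\}$), so ``small eigenvalue $\Rightarrow$ nontrivial stationary sub-solution'' needs an actual construction (e.g., a continuation or degree argument in the spirit of Berestycki--Hamel), which you have not supplied.
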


\begin{proof}
By comparison theorems, we only need to consider $f$ of ignition
type --- with $\tht_0>0$. We again choose $L\in\bbN$ that satisfies
Lemma \ref{L.2.2}(ii) for $t=1$ and $\eps=\tfrac 12$. We next note
that there is $\delta>0$ such that
\begin{equation} \lb{3.1}
\bbP(|X_t^{A_n,x}-x|\ge t^{8/15}) \le e^{-t^{\delta}}
\end{equation}
for all large enough $t$ and all $x\in D$ and $n$. Indeed, assume
$x_1\in \bbZ$ and $t\in\bbZ$ (the general case follows immediately
from this), and let $j(t)=\inf\{j\,|\,\tau_j> t\}$, with $\tau_j$
from the proof of Lemma \ref{L.3.1}. Then that proof shows that for
$C\in\bbZ$ we have
\begin{equation} \lb{3.2}
\bbP(j(t)> Ct) = \bbP(\tau_{Ct}\le t)\le \kappa(C)^t
\end{equation}
with $\kappa(C)<1$ if $C$ is large. On the other hand, symmetry of $u$
across each hyperplane $x=k\in\bbZ$ shows that $Y_j\equiv
(X_{\tau_j}^{A_n,x} - X_{\tau_{j-1}}^{A_n,x})_1$ are iids with $\bbP
\big( Y_j=\pm L\big) = \frac 12$. This gives
\[
\bbP(|X_{j(t)}^{A_n,x}-x|\ge L(Ct)^{9/17}\,\big|\, j(t)\le  Ct) \le
e^{-(Ct)^{\delta}}
\]
for some $\delta>0$ by
\begin{align*}
\sum_{k=0}^{\frac 12(j-j^{\delta+\frac 12})} \frac{{j\choose
k}}{2^j} & \approx (1+j^{\delta-\frac 12})^{-\frac
12(j+j^{\delta+\frac 12})}
(1-j^{\delta-\frac 12})^{-\frac 12(j-j^{\delta+\frac 12})} \\
& = \Big[ (1-j^{2\delta-1})^{-j^{1-2\delta}} (1+j^{\delta-\frac
12})^{-j^{\frac 12-\delta}} (1-j^{\delta-\frac 12})^{j^{\frac
12-\delta}} \Big]^{j^{2\delta}/2}  \approx e^{-j^{2\delta}/2},
\end{align*}
where we used Stirling's formula again. This, the fact that
$|X_{\tau_{j(t)}}^{A_n,x}-X_{t}^{A_n,x}|\le L$ (by the definition of
$\tau_j$ and $j(t)$), and \eqref{3.2} yield \eqref{3.1} for large
enough $t$ (with a different $\delta>0$).

We will also need the conclusion of Lemma 3.1 in \cite{FKR} which
says that there is $\til c>0$ such that for any $x\in D$, $m\in\bbZ$,
$A\in\bbR$, incompressible $u$, and $t\ge 1$ we have
\begin{equation} \lb{3.3}
\bbP \big( \big(X_t^{A,x}\big)_1\in [m,m+1] \big) \le \til
c t^{-1/2}.
\end{equation}
We note that \cite{FKR} only considers $d=2$, but the
general case is identical.

Let us now take non-negative $\psi_0\in C(\bbR)\cap C^3([-2,2])$
such that
\begin{gather}
{\rm supp}\, \psi_0=[-2,2], \notag\\
\psi_0(s)=\psi_0(-s) \hbox{$\qquad$ and $\qquad$}  \psi_0(0)=\tfrac {2+\tht_0}3, \notag\\
\psi_0(s) = \tfrac {1+\tht_0}6 \big[ (3-|s|)^2-1 \big] \text{ for $|s|\in[1,2]$}, \notag\\
\psi_0' \text{ is decreasing on $[-1,1]$}. \notag
\end{gather}
Note that this means that $\psi_0$ is non-negative, symmetric,
non-increasing on $\bbR^+$, and convex where $f(\psi_0(s))=0$. We then
let
\[
T_0(x)\equiv \psi_0 \left( \frac{x_1}M \right)\ge 0
\]
with a large $M\in\bbZ$ to be determined later. We will show using
the properties of $\psi_0$ that if $T$ solves \eqref{1.1} with
$A=A_n$, then for $\tau\equiv M^{3/2}$ we have
\begin{equation} \lb{3.4}
T(\tau,x)\ge T_0(x)
\end{equation}
(which gives the desired result by comparison theorems).

Let $\eps$ be such that $\psi_0(1+\eps)=\tfrac{1+2\tht_0}3$ and $M$ such
that $\eps M+ M^{4/5}\le M-2$. Let $\phi$ be the solution of
\eqref{2.2} with $\phi_0\equiv T_0$ and assume first that
$x_1\in[(1+\eps) M,2M-M^{4/5}]\cap\bbZ$. Let $x'\equiv
(x_2,\dots,x_d)$. Then by \eqref{2.3}, monotonicity of $\psi_0$ on
$\bbR^+$, and symmetry of $u$,
\begin{align}
\phi & (\tau,x)  \ge \sum_{m=-M^{4/5}-1}^{M^{4/5}}
\bbP((X_\tau^{A_n,x})_1\in[x_1+m,x_1+m+1])\phi_0(x_1+m+1,x') \notag\\
& = \sum_{m=0}^{M^{4/5}} \bbP((X_\tau^{A_n,x})_1\in [x_1+m,x_1+m+1])
(\phi_0(x_1+m+1,x')+\phi_0(x_1-m,x')). \lb{3.5}
\end{align}
We have
\[
\psi_0 \left( \frac{x_1+m+1}M \right) + \psi_0 \left( \frac{x_1-m}M
\right) = 2\psi_0 \left( \frac{x_1+\frac 12}M \right) + \psi_0''
\left( \frac{x_1+\frac 12}M \right) \left( \frac{m+\frac 12}M
\right)^2 + O\left( \left( \frac{m}M \right)^3\right),
\]
and $\tau=M^{3/2}$ together with \eqref{3.1} implies that the sum of
the $\bbP(\cdot)$ terms in \eqref{3.5} is larger than $\tfrac
12(1-e^{-\tau^\delta}) = \tfrac 12(1-e^{-M^{3\delta/2}})$. This and
$\psi_0''(s)=\tfrac{1+\tht_0}3$ for $s\in(1,2)$ yields
\[
\phi(\tau,x) \ge (1-e^{-M^{3\delta/2}}) \phi_0 \left( x_1+\tfrac
12,x'\right) + \frac{1+\tht_0}{12}(4\til cM^{1/4})^{-2} +
O(M^{-3/5}),
\]
where we also used that \eqref{3.3} gives
\[
\bbP \left(|X_\tau^{A_n,x}-x|\ge \frac{M^{3/4}}{4\til c} \right) \ge
\frac 12.
\]
Since $\phi_0(x)-\phi_0(x_1+\tfrac 12,x')=O(M^{-1})$, this means
\begin{equation} \lb{3.6}
\phi(\tau,x) \ge \phi_0(x) + c'M^{-1/2}
\end{equation}
for some $c'>0$ and any large enough $M$.

The same argument applies for any $\tau'\in[\tau/2,\tau]$ (with a
uniform $c'$) in place of $\tau$. This, Lemma \ref{L.2.3}, and the
fact that $\phi_0$ varies on a scale $O(M^{-1})$ on $[\lfloor x
\rfloor, \lfloor x \rfloor+1]\times\bbT^{d-1}$ yield
\eqref{3.6} for any $x_1\in[(1+\eps)M,2M-M^{4/5}]$, provided $M$ is
large enough. If $x_1\in[2M-M^{4/5},2M]$, then \eqref{3.6} follows
in the same way because $\psi_0(s)>\tfrac{1+\tht_0}6 [ (3-|s|)^2-1
]$ for $s\in(2,3)$. And if $x_1>2M$, then \eqref{3.6} is immediate
from $\phi(\tau,x)\ge 0$.

Symmetry and $T\ge\phi$ give \eqref{3.4} whenever $|x|\ge
(1+\eps)M$, so let us now consider $|x|\le (1+\eps)M$. As above we
obtain for large $M$,
\begin{equation} \lb{3.7}
\phi(\tau,x) \ge \phi_0(x) - c' M^{-1/2},
\end{equation}
where $c'$ only depends on $\|\psi_0''\|_\infty$. We now choose a
convex $g:\bbR^+\to\bbR^+$ with $g(s)\le f(s)$ for $s\le
\tfrac{3+\tht_0}4$ and $g(s)\ge \alpha$ for some $\alpha>0$ and all $s\ge
\tfrac{1+3\tht_0}4$. Define $\beta>0$ so that if $\gamma(0)=\tfrac{2+\tht_0}3$
and $\gamma'(s)=g(\gamma(s))$, then $\gamma(\beta)=\tfrac{3+\tht_0}4$.
Next let $\til f\equiv \tfrac\beta \tau g\le g$ when
$\tau=M^{3/2}\ge\beta$ and let $w:(\bbR^+)^2\to\bbR^+$ satisfy
$w(0,s)=s$ and
\[
w_t(t,s)=\til f(w(t,s)).
\]
Notice that
\begin{equation} \lb{3.8}
w(\tau,\tfrac{2+\tht_0}3)=\tfrac{3+\tht_0}4 \qquad \text{and} \qquad
w(\tau,s)\ge s+\alpha\beta \text{ for $s\ge \tfrac{1+3\tht_0}4$}.
\end{equation}
It is easy to show using $\til f',\til f''\ge 0$ that $w_s,w_{ss}\ge
0$. It then follows that $\til T(t,x)\equiv w(t,\phi(t,x))$ is a
sub-solution of \eqref{1.1} with $A=A_n$ and $\til T_0=T_0$
as long as $\|\til T(t,\cdot)\|_\infty \le \tfrac{3+\tht_0}4$ (so that
$\til f(\til T)\le f(\til T)$). Since $\|\phi\|_\infty\le
\psi_0(0)=\tfrac{2+\tht_0}3$, this is true for all $t\le \tau$ by
\eqref{3.8} and $w_t,w_s\ge 0$. But then $T(\tau,x)\ge \til
T(\tau,x)$, while large enough $M$ guarantees for $|x|\le
(1+\eps)M$,
\[
\phi(\tau,x) \ge \phi_0(x) - c'M^{-1/2} \ge \tfrac{1+2\tht_0}3 -
c'M^{-1/2} \ge \tfrac{1+3\tht_0}4.
\]
So for these $x$ by \eqref{3.8},
\[
T(\tau,x)\ge \til T(\tau,x) \ge \phi(\tau,x) +\alpha\beta \ge
\phi_0(x) - c'M^{-1/2} +\alpha\beta \ge \phi_0(x)=T_0(x)
\]
when $M$ is large. This is \eqref{3.4} and thus concludes the proof.
\end{proof}

\section{Proof of Theorem \ref{T.1.1}: Part II} \lb{S4}

We now assume that $u$ and $f$ are as in Theorem \ref{T.1.1} and
$A_n\to\infty$ is such that Lemma \ref{L.2.2}(i) holds. We will then
show that $\limsup_{n\to\infty}c_*(A_n)=\infty$, and that there is
$c>0$ such that if $f$ is of ignition type with
$\|f(s)/s\|_\infty\le c$, then any compactly supported initial datum
$T_0$ for \eqref{1.1} is quenched by some flow $A_nu$. The analysis
in this section applies in two dimensions only, so we will consider
$d=2$ and $D=\bbR\times\bbT$.

\begin{lemma} \lb{L.4.1}
Consider the setting of Theorem \ref{T.1.1} with $D=\bbR\times\bbT$
and let $A_n\to\infty$ be such that Lemma \ref{L.2.2}(i) holds. Then
$\limsup_{n\to\infty}c_*(A_n)=\infty$.
\end{lemma}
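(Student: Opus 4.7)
My plan is to show that under Lemma~\ref{L.2.2}(i), any prescribed target speed $C>0$ is eventually exceeded by some $c_*(A_n)$. Given $C$, I fix a time scale $t_0$ and length scale $L\gg Ct_0$, and I use Lemma~\ref{L.2.2}(i) applied with parameters $t=t_0$, some small $\eps$, and that $L$, to find $x,n$ with $\bbP(|(X^{A_n,x}_{t_0})_1-x_1|\le L)<\eps$. By periodicity we may take $|x_1|\le 1$. Using Lemma~\ref{L.2.1}(ii) applied with $k$ the nearest integer to $x_1$, and, if necessary, replacing $u$ by its reflection $u\circ R$ (which preserves every hypothesis), I would deduce that from some starting point on an integer-line axis of symmetry the process makes a rightward displacement of at least $L/2$ in time $t_0$ with probability bounded below uniformly in the parameters. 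Here the integer-axis reduction needs a brief extra argument: either by a short pre-conditioning diffusive step that lands the process within a cell of an integer line with positive probability, or by noting that applying Lemma~\ref{L.2.1}(ii) with $k=0$ to the point $x$ with $|x_1|\le 1$ already yields asymmetric right-displacement with probability at least roughly $(1-\eps)/2$.

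Next I would transfer this probabilistic spreading to the PDE. The natural object is $\phi(t,y)=\bbP((X^{A_n,y}_t)_1\le 0)$, the solution of \eqref{2.2} with $\phi_0=\chi_{(-\infty,0]\times\bbT}$; by \eqref{2.4a} the solution $T$ of \eqref{1.1} with the same initial data satisfies $T\ge \phi$. The probabilistic step above, applied in dual form (incompressibility gives the usual backward/forward duality for $u$ and $-u$, and $-u$ is also symmetric across $x_1=0$), shows that $\phi(t_0,y)$ is bounded below by some $\theta^*>0$ on a subset of positive measure in a cell $[kL,(k+1)L]\times\bbT$ sitting far to the right of $\{y_1\le 0\}$. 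Now a one-cell nonlinear bootstrap (in the spirit of the block construction used in Lemma~\ref{L.3.2}, but driving the solution upward rather than keeping it from quenching) shows that after an additional $O(1)$ time the combustion nonlinearity brings $T$ above $\theta_0+\delta$ on a substantial portion of the next cell. Iterating this ``cell hop'' $k$ times produces, at time $kt_0+O(k)$, a region where $T\ge\theta_0+\delta$ that has advanced by $\ge kL$. Since $L/t_0$ can be made arbitrarily large along the sequence provided by Lemma~\ref{L.2.2}(i), this establishes an effective rightward spreading speed exceeding any prescribed $C$, and the standard identification of $c_*(A)$ with the asymptotic spreading speed for \eqref{1.1} \cite{BHN-1,Weinberger,Xin} gives $c_*(A_n)\ge C$ for infinitely many $n$.

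The main obstacle is the \emph{symmetry barrier}: Lemma~\ref{L.2.1}(ii) forces $\phi(t,y)\le \tfrac12$ whenever $y_1\ge 0$ for step initial data symmetric across $x_1=0$, so a purely linear comparison cannot cross the ignition threshold $\theta_0$ once $\theta_0\ge \tfrac12$. This is exactly where the two-dimensional hypothesis will enter, since in 2D the cellular symmetric structure lets us use the compact Dirichlet decay of Lemma~\ref{L.2.3} on a single cell to perform the nonlinear amplification in a controlled way. Making this block iteration rigorous, and arranging the probabilistic ``rightward hop'' to happen uniformly from cell to cell rather than only from one distinguished starting point, will be the technical heart of the argument.
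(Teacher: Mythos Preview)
Your approach is fundamentally different from the paper's, and the gap you yourself flag---the symmetry barrier---is fatal as stated. You try to build a direct spreading argument: use Lemma~\ref{L.2.2}(i) to produce a large rightward probabilistic hop, then invoke a nonlinear ``cell bootstrap'' to push $T$ above $\theta_0$ far to the right, and iterate. The difficulty is that for step data the linear solution $\phi$ never exceeds $\tfrac12$ on the right of the symmetry axis, and when $\theta_0\ge\tfrac12$ the reaction $f$ vanishes identically on the entire set where you need amplification. Your proposed fix, Lemma~\ref{L.2.3}, is a \emph{decay} estimate under Dirichlet conditions; it cannot raise $T$ above the maximum of its boundary data, so it provides no mechanism to cross $\theta_0$. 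There is also a secondary uniformity problem you note: Lemma~\ref{L.2.2}(i) produces a single pair $(x,n)$ for each choice of $(t,\eps,L)$, and periodicity only propagates this to integer translates of $x$, not to the random landing points your iteration would need. Neither obstacle is addressed in the proposal.

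The paper sidesteps both issues by arguing by contradiction on the pulsating front itself rather than on general initial data. Assuming $c_*(A_n)\le c_0$, integrate \eqref{1.1} and $T\cdot$\eqref{1.1} over one temporal period to get uniform bounds $\int_D f(T(0,\cdot))\le 2c_0$ and $\int_D|\nabla T(0,\cdot)|^2\le c_0$. These energy bounds, together with a two-dimensional connectedness argument (this is where $d=2$ actually enters), force the transition zone between $\{T\ge 1-\eps\}$ and $\{T\le 1-10\eps\}$ to have width at most some $L_\eps$ independent of $n$. Now apply Lemma~\ref{L.2.2}(i) with this $L_\eps$: for a short time $\tau$ there exist $n$ and a point $x$ in the hot region ($T(0,x)\ge 1-\eps$) from which the process exits an $L_\eps$-window with probability $>\tfrac12$, hence lands in $\{T\le 1-10\eps\}$ with probability $>\tfrac14$. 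The bound \eqref{2.4a} then gives $T(\tau,x)<1-\eps\le T(0,x)$, contradicting the monotonicity $T_t\ge 0$ of pulsating fronts from \cite{BH}. The key point is that the comparison is made near $T\approx 1$, not near $T\approx\theta_0$, so the symmetry barrier never arises.
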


\begin{proof}
Assume that $c_*(A_n)\le c_0<\infty$ for all $n$ and let $T$ be a
pulsating front solution of \eqref{1.1} with $A=A_n$ and speed
$c_*(A_n)$, that is,
\begin{gather} \lb{4.1}
\begin{split}
T(t+c_*(A_n)^{-1},x_1+1,x_2)=T(t,x_1,x_2), \\
T(t,\pm\infty,x_2)=\tfrac 12 \mp \tfrac 12 \quad \text{ uniformly in
$x_2$}
\end{split}
\end{gather}
(recall that $u$ has period 1 in $x_1$). We note that \cite{BH}
shows
\begin{equation} \lb{4.2}
T_t(t,x)\ge 0.
\end{equation}
Integrating \eqref{1.1} over $[0,c_*(A_n)^{-1}]\times D$ and using
\eqref{4.1} and incompressibility of $u$, we obtain
\[
1=\int_0^{c_*(A_n)^{-1}} \int_D f(T(t,x))\,dxdt.
\]
Next we multiply \eqref{1.1} by $T$ and again integrate as  above to
get
\[
\frac 12 = \int_0^{c_*(A_n)^{-1}} \int_D T(t,x)f(T(t,x))-|\nabla
T(t,x)|^2\,dxdt \le 1 - \int_0^{c_*(A_n)^{-1}} \int_D |\nabla
T(t,x)|^2\,dxdt.
\]
This means that for some $t\in[0,c_*(A_n)^{-1}]$ (which we take to
be 0 by translating $T$ in time),
\begin{align}
\int_D f(T(0,x))\,dx \le 2c_0,  \lb{4.3}\\
\int_D |\nabla T(0,x)|^2\,dx \le c_0.  \lb{4.4}
\end{align}
We will now show that \eqref{4.1}--\eqref{4.4} force the reaction
zone (front width) to be bounded in the following sense. Let
$D^-_\eps$ be the rightmost cell $[m^-_\eps,m^-_\eps+1]\times\bbT$
such that $\inf_{x\in D^-_\eps} T(0,x)\ge 1-\eps$ (i.e., $m^-_\eps$
is the largest integer for which this condition holds). We also let
$D^+_\eps$ be the leftmost cell $[m^+_\eps,m^+_\eps+1]\times\bbT$
such that $\sup_{x\in D^+_\eps} T(0,x)\le 1-\eps$. Obviously
$m^-_\eps< m^+_\eps$. We will now show that for each small $\eps>0$
there is $L_\eps<\infty$ such that for each $n$ we have
\begin{equation} \lb{4.5}
m^+_{10\eps} - m^-_{\eps} \le L_\eps.
\end{equation}

Assume for a moment that \eqref{4.5} holds. Periodicity and
\eqref{2.7} tell us that there are $n$ and $x\in D^-_\eps$ such that
\[
\bbP \big(|X^{A_n,x}_\tau-x|\ge L_\eps \big) > \frac 12
\]
for $\tau\equiv \eps\|f'\|_\infty^{-1}>0$. Since $x_1\ge m^-_\eps\ge
m^+_{10\eps}-L_\eps$, symmetry of $u$ implies
\[
\bbP \big((X^{A_n,x}_\tau)_1\ge m^+_{10\eps} \big) > \frac 14.
\]
Using \eqref{2.4a} and \eqref{2.3} we have
\[
T(\tau,x)\le e^{\tau \|f'\|_\infty} \left( \frac 34 + \frac
{1-10\eps}4 \right) < 1-\eps \le T(0,x)
\]
if $\eps>0$ is small. This contradicts \eqref{4.2}, so our
assumption $c_*(A_n)\le c_0<\infty$ must be invalid. Thus the proof
will be finished if we establish \eqref{4.5} for all small $\eps>0$.

Let us consider an arbitrary small $\eps>0$ such that $f$ is bounded
away from zero on $[1-13\eps,1-\tfrac \eps 3]$ and assume, towards
contradiction, that for each $L\in\bbN$ there is $n$ such that
\begin{equation} \lb{4.6}
m^+_{10\eps} - m^-_{\eps} \ge 10L.
\end{equation}
Let $T_0(x)\equiv T(0,x)$,
\[
\bar T_0(x)\equiv \int_{[\lfloor x_1 \rfloor, \lfloor x_1 \rfloor
+1]\times\bbT} T_0(x)\,dx,
\]
and denote $D_j\equiv [m^-_\eps+j, m^-_\eps+j +1]\times\bbT$. Then
\eqref{4.4} and Poincar\'e inequality (with constant $C$) imply that
for each small $\delta>0$ and $L\equiv \lceil Cc_0/\delta\rceil$,
at least $7L$ of the cells $D_j$, $j=L,\dots,9L$, satisfy
\begin{equation} \lb{4.6a}
\|T_0-\bar T_0\|^2_{L^2(D_j)} \le C\|\nabla T_0\|^2_{L^2(D_j)} \le
\delta.
\end{equation}
Hence there are at least $\lfloor \tfrac{3L}5\rfloor$ disjoint
5-tuples of consecutive cells satisfying \eqref{4.6a}. Then
\eqref{4.3}, $f$ bounded away from zero on $[1-13\eps,1-\tfrac \eps
3]$, and $\bar T_0(D_j)$ decreasing in $j$ (by \eqref{4.2}) imply
that for some $j_0\in[L,9L]$ we must have either \eqref{4.6a} and
$\bar T_0(D_{j})\le 1-12\eps$ for $j=j_0-2,\dots,j_0+2$, or
\eqref{4.6a} and $\bar T_0(D_{j})\ge 1-\tfrac\eps 2$ for
$j=j_0-2,\dots,j_0+2$ (provided $\delta$ is small enough and $L$
large).

Let us assume the case $\bar T_0(D_{j})\le 1-12\eps$ for
$j=j_0-2,\dots,j_0+2$, $j_0\in[L,9L]$. Then \eqref{4.2} and
\eqref{4.6} say that there must be $y\in D_{j_0}$ such that for
$t\ge 0$,
\begin{equation} \lb{4.7}
T(t,y)\ge T_0(y)\ge 1-10\eps.
\end{equation}
Let $S^-_{2\gamma}\subset D_{j_0-2}\cup D_{j_0-1}\cup D_{j_0}$ be
the square of a small side $2\gamma>0$ (to be chosen later) centered
at $y^-\equiv y-(1,0)$ and denote by $\Gamma^-$ the intersection of
$S^-_{2\gamma}$ with the connected component $\Omega^-$ of the set
$\{x\,|\, T_0(x)\ge 1-11\eps\}$ containing $y^-$ (recall that that
$T_0(y^-)\ge T_0(y)\ge 1-10\eps$).

If $\Gamma^-$ has diameter less than $\gamma$ (in particular,
$\Gamma^-=\Omega^-\subseteq S^-_{2\gamma}$), then for
$\Gamma\equiv\Gamma^-+(1,0)$, all $x\in\partial \Gamma$, and all
$t\le c_*(A_n)^{-1}$,
\[
T(t,x)\le T(0,x-(1,0)) \le 1-11\eps
\]
by \eqref{4.1} and \eqref{4.2}. It follows by comparison that
$T(t,x)\le e^{t\|f'\|_\infty}(R(t,x) + 1-11\eps)$ where $R(t,x)$
solves \eqref{2.2} on $S_{2\gamma}\equiv S^-_{2\gamma}+(1,0)$ with
Dirichlet boundary conditions and $R(0,x)= 11\eps\chi_\Gamma(x)$.
But then the uniform bound in Lemma \ref{L.2.3} and parabolic
scaling in $(t,x)$ gives that for any $t>0$ there is small enough
$\gamma>0$ such that $\|R(t,x)\|_\infty\le \tfrac \eps 2$, and if
$t$ is chosen small enough (and $\gamma$ accordingly), then
$T(t,y)<1-10\eps$ follows. This clearly contradicts \eqref{4.7}.

If instead (for the chosen $\gamma$) the set $\Gamma^-\subset
D_{j_0-2}\cup D_{j_0-1}\cup D_{j_0}$ has diameter at least $\gamma$,
then $\bar T_0(D_{j})\le 1-12\eps$ and $\inf T_0(\Gamma^-)\ge
1-11\eps$ imply that the second inequality in \eqref{4.6a} must be
violated for at least one of $j=j_0-2,j_0-1,j_0$, provided
$\delta>0$ is chosen small enough (depending on $\gamma,\eps$).
Indeed --- if $\|\nabla T_0\|^2_{L^2(D_j)}$ is small enough, then
$T$ must be close to $1-11\eps$ on some vertical line passing
through $\Gamma^-$, and then $T$ must be close to $1-11\eps$ on most
horizontal lines inside $D_j$ by the same argument. This contradicts
$\bar T_0(D_{j})\le 1-12\eps$.

Finally, if we instead assume $\bar T_0(D_{j})\ge 1-\tfrac\eps 2$
for $j=j_0-2,\dots,j_0+2$ and $T(t,y)\le T_0(y-(1,0))\le 1-\eps$ for
small $t\ge 0$, a similar argument again leads to contradiction.
This means that \eqref{4.6} cannot hold for small $\eps>0$ and
\eqref{4.5} follows. The proof is finished.
\end{proof}

\begin{lemma} \lb{L.4.2}
Consider the setting of Theorem \ref{T.1.1} with $D=\bbR\times\bbT$.
There is $c>0$ such that if $f$ is of ignition type with
$\|f(s)/s\|_\infty\le c$ and $A_n\to\infty$ is such that Lemma
\ref{L.2.2}(i) holds, then for any compactly supported
$T_0(x)\in[0,1]$ there is $n$ such that the solution $T$ of
\eqref{1.1} with $A=A_n$ quenches.
\end{lemma}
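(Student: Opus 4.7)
The plan is to exploit the linear comparison bound $T\le e^{ct}\phi$ from \eqref{2.4a}---where $c=\|f(s)/s\|_\infty$ and $\phi$ solves \eqref{2.2} with the same initial datum as $T$---together with enhanced diffusive spreading from Lemma~\ref{L.2.2}(i). By comparison we may replace $T_0$ by $\chi_Q$ with $Q\equiv[-M,M]\times\bbT$ containing $\supp T_0$; then $\phi(t,x)=\bbP(X_t^{A,x}\in Q)$.

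The quenching will follow once we produce some $n$ and $t^*>0$ satisfying the uniform bound
\begin{equation*}
\sup_{x\in D}\phi(t^*,x)\;<\;\theta_0\,e^{-ct^*},
\end{equation*}
since \eqref{2.4a} then yields $T(t^*,\cdot)<\theta_0$ throughout $D$ and turns off the reaction. For $t\ge t^*$ the solution $T$ evolves by the linear advection-diffusion equation from data bounded by $\theta_0$ that decays as $|x_1|\to\infty$ (by \eqref{2.6}); the universal estimate \eqref{3.3} combined with the maximum principle then forces $\sup_x T(t,\cdot)\to 0$, which is quenching.

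The core of the proof is the displayed uniform bound. Using only \eqref{3.3} one has $\phi(t,x)\le\tilde c(2M+1)t^{-1/2}$; optimizing $t^{-1/2}e^{ct}$ at $t^*=1/(2c)$ shows this suffices only when $c\lesssim(\theta_0/M)^2$, so \eqref{3.3} alone does not give an $M$-independent threshold. To decouple the smallness of $c$ from $M$ we invoke Lemma~\ref{L.2.2}(i), which supplies, for each $\eta>0$, $L<\infty$, and $t>0$, some $x_0$ and $n$ with $\bbP(|X_t^{A_n,x_0}-x_0|\le L)<\eta$. Symmetry of $u$ across each vertical line $x_1=k$ (Lemma~\ref{L.2.1}(i)--(ii)), periodicity of $u$, and continuity of $\phi$ in its base point then promote this single-point spreading to a uniform-in-$x$ estimate $\sup_{|x_1|\le 1}\bbP(X_t^{A_n,x}\in Q)<\eta$ for $n$ taken sufficiently far along the witness subsequence; combined with \eqref{2.6} to control $|x_1|\gg M$, this delivers the uniform bound provided $c$ is below a universal threshold.

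The main obstacle is precisely this promotion of the \emph{existential} spreading statement of Lemma~\ref{L.2.2}(i) to a \emph{uniform-in-$x$} spreading bound, which is where the two-dimensional hypothesis enters essentially: in $d=2$ the streamlines of an incompressible symmetric periodic flow classify into closed cells and percolating channels, and the failure of \eqref{1.2} in $H^1$ rules out the latter, so no open set of initial positions can remain trapped uniformly as $A_n\to\infty$. This topological input has no clear analog in higher dimensions, consistent with the paper restricting Theorem~\ref{T.1.1} and this lemma to $d=2$.
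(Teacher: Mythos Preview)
Your proposal has a genuine gap at the ``promotion'' step. The claim that symmetry, periodicity, and continuity upgrade the \emph{existential} spreading of Lemma~\ref{L.2.2}(i) to a \emph{uniform-in-$x$} bound $\sup_{|x_1|\le 1}\bbP(X_t^{A_n,x}\in Q)<\eta$ is not justified, and in fact is false in general: at points where the flow is weak (e.g., cell centers in the model cellular flow) the process behaves like pure Brownian motion for short times, and $\phi(t,\cdot)$ need not be uniformly small there no matter how large $A_n$ is. Your streamline-topology heuristic does not address this, and the failure of \eqref{1.2} in $H^1$ is never invoked in the paper's proof of this lemma.

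The paper's argument is structurally different. From the single point $y$ with $\psi(\tau,y)\le\delta/2$ it uses the \emph{maximum principle} to conclude that the connected component of $\{(t,x):\psi(t,x)\le\delta/2\}$ containing $(\tau,y)$ must reach $\{\psi_0\le\delta/2\}$; projecting to space yields a continuous curve $h$ joining $\{0\}\times\bbT$ to $\{K\}\times\bbT$ along which $\phi(t,h(s))\le\delta$ for all $t\ge\tau$. By symmetry and periodicity this extends to a $2$-periodic curve, and here is where $d=2$ enters: such a curve \emph{separates} $\bbR\times\bbT$ into bounded cells. One then compares $\phi$ with the solution of \eqref{2.2} on $2\bbT\times\bbT$ with Dirichlet condition on $h$, and the universal exponential decay of Lemma~\ref{L.2.3} (whose constant $c$ is the threshold in the statement) gives $\|\phi(t,\cdot)\|_\infty\le\delta+2e^{-c(t-\tau)}$. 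If $\|f(s)/s\|_\infty=c'<c$, choosing $\tau,\delta$ small makes $e^{c't_0}(\delta+2e^{c\tau}e^{-ct_0})\le\theta_0$ for some $t_0$. Thus the universal $c$ is precisely the Poincar\'e/Lemma~\ref{L.2.3} constant, not something extracted from your optimization of $t^{-1/2}e^{ct}$, and the mechanism for uniform smallness is ``separating curve $+$ Dirichlet decay,'' not a direct uniform spreading estimate.
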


\noindent
{\it Remark.} We note that $c$ is from Lemma \ref{L.2.3} and can be
easily evaluated from its proof.

\begin{proof}
By comparison theorems, it is sufficient to consider initial data
$T_0(x)\equiv\chi_{[-L,L]}(x_1)$ for all $L\in\bbN$. Let $\phi$ be
the solution of \eqref{2.2} with $A=A_n$ and initial datum
$\phi_0\equiv T_0$. We first claim that for each $\tau,\delta>0$
there is $n$ and a continuous curve $h:[0,1]\to [0,1]\times\bbT$
such that $(h(0))_1=0$ and $(h(1))_1=1$ , and for all $s\in[0,1]$
and $t\ge\tau$,
\begin{equation} \lb{4.8}
\phi(t,h(s)) \le \delta.
\end{equation}

To this end we let $\psi$ be the solution of \eqref{2.2} with
initial condition $\psi_0\equiv\chi_{[-K-2,K]}(x_1)$ where $K\ge
3L\delta^{-1}$. By periodicity of $u$ and \eqref{2.7}, there must be
$n$ (which will be kept constant from now on) and
$y\in[-1,0]\times\bbT$ such that
\[
\psi(\tau,y)=\bbP \big((X^{A_n,y}_\tau)_1\in [-K-2,K] \big) \le
\frac\delta 2.
\]
The maximum principle for \eqref{2.2} implies that the connected
component of the set
\[
\{(t,x)\in[0,\tau]\times D \,|\,\psi(t,x)\le\tfrac\delta 2\}
\]
containing $(\tau,y)$ must intersect
\[
\{x\in D \,|\,\psi(0,x)\le\tfrac\delta 2\} =
(\bbR\setminus[-K-2,K])\times\bbT.
\]
Since by symmetry $\psi(t,x_1,x_2)=\psi(t,-2-x_1,x_2)$ for $x_1\ge
0$, this means that there is a curve $h(s)$ joining
$\{0\}\times\bbT$ and $\{K\}\times\bbT$ such that for each $s$ there
is $\tau_s\le\tau$ with 
\[
\psi(\tau_s,h(s))=\bbP \big((X^{A_n,h(s)}_{\tau_s})_1\in [-K-2,K]
\big) \le \frac\delta 2.
\]
Lemma \ref{L.2.1}(iii) and the definition of $K$ then mean that for
all $t\ge\tau$,
\[
\phi(t,h(s))=\bbP \big(|X^{A_n,h(s)}_t|\le L] \big) \le \frac\delta
2 + \Big(1-\frac\delta 2\Big) \frac\delta 3 \le \delta
\]
which is \eqref{4.8} (after reparametrization of $h$ and restriction
to $s\in[0,1]$).

Symmetry of $u$ and $\phi_0$ implies that \eqref{4.8} holds for
$h(s)$ extended to $s\in[-1,1]$ by $h(-s)=(-(h(s))_1,(h(s))_2)$.
Finally, \eqref{4.8} applies to $h(s)$ extended periodically (with
period 2) onto $\bbR$. This last claim holds because
$\phi(t,x)\ge\phi(t,x+(2,0))$ when $x_1\ge -1$ (and
$\phi(t,x)\ge\phi(t,x-(2,0))$ when $x_1\le 1$), which in turn
follows because $\phi(t,x)-\phi(t,x+(2,0))$ solves \eqref{2.2} with
initial datum that is symmetric across $x_1=-1$ and non-negative on
$[-1,\infty)\times\bbT$ (and hence stays such by the symmetry of $u$).

This means that $\|\phi(t+\tau,\cdot)\|_\infty\le
\|\psi(t,\cdot)\|_\infty+\delta$ where $\psi$ is the solution of
\eqref{2.2} on $2\bbT\times\bbT$ with $\psi_0\equiv 1$ and
$\psi(t,h(s))=0$ for all $t>0$ and $s\in[0,2]$.
Since the Poincar\' e inequality and the proof of Lemma \ref{L.2.3}
extend to this setting with the same universal constant $c>0$, we
obtain that
$\|\phi(t,\cdot)\|_\infty\le \delta + 2e^{-c(t-\tau)}$. If now
$\|f(s)/s\|_\infty = c' < c$ and $\tau,\delta>0$ are chosen small
enough depending on $c-c'$ (and $n$ accordingly), we obtain
$\|T(t_0,\cdot)\|_\infty\le e^{c't_0}(\delta+2e^{c\tau}e^{-ct_0})\le
\tht_0$ for some $t_0$. The maximum principle then implies
$\|T(t,\cdot)\|_\infty\le\tht_0$ for any $t\ge t_0$ and quenching
follows.
\end{proof}

The proof of Theorem \ref{T.1.1} is now based on the last four
lemmas and this result from \cite{RZ}:

\begin{lemma} \label{L.5.1}
Assume the setting of Theorem \ref{T.1.1} with $f$ a KPP
nonlinearity and $D=\bbR\times\bbT$.
\begin{SL}
\item[{\rm{(i)}}] If \eqref{1.2} on $2\bbT\times\bbT$
has a solution $\psi\in H^1(2\bbT\times\bbT)$, then \eqref{1.3}
holds.
\item[{\rm{(ii)}}] If \eqref{1.2} has no $H^1(2\bbT\times\bbT)$-solutions,
then \eqref{1.4} holds.
\end{SL}
\end{lemma}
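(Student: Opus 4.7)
The plan is to use the Berestycki--Hamel variational characterization of the KPP minimal speed, which in our setting reads
\[
c_*(A) \,=\, \inf_{\lambda > 0} \frac{\lambda^2 + f'(0) - \mu(\lambda, A)}{\lambda},
\]
where $\mu(\lambda, A)$ is the principal eigenvalue of the (non-self-adjoint) elliptic operator
\[
\mathcal{L}_{\lambda, A}\varphi \,=\, -\Delta \varphi + 2\lambda \partial_{x_1}\varphi + A\, u \cdot \nabla \varphi - \lambda A u_1 \varphi
\]
on positive functions on $2\bbT \times \bbT$. The algebraic heart of the matter is that if $\psi$ solves $u\cdot\nabla\psi=u_1$, then the exponential gauge $\varphi = e^{\lambda\psi}$ annihilates the two $A$-dependent terms pointwise: $A\,u\cdot\nabla e^{\lambda\psi} = \lambda A(u\cdot\nabla\psi)e^{\lambda\psi} = \lambda A u_1 e^{\lambda\psi}$. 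Consequently $\mathcal{L}_{\lambda, A} e^{\lambda\psi}/e^{\lambda\psi}$ collapses to the $A$-independent quantity $-\lambda^2|\nabla\psi|^2 - \lambda\Delta\psi + 2\lambda^2 \partial_{x_1}\psi$.

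For part (i), I would couple this cancellation with the maximum-principle lower bound
\[
\mu(\lambda, A) \,\geq\, \inf_x \frac{\mathcal{L}_{\lambda, A}\varphi(x)}{\varphi(x)},
\]
valid for any positive $C^2$ test $\varphi$, applied at a fixed exponent $\lambda_0$. Since $\psi$ is only $H^1$, I cannot directly substitute $\varphi = e^{\lambda_0 \psi}$; I would instead mollify $\psi$ to a smooth $\psi_\eps$, which produces a residual $\lambda_0 A(u\cdot\nabla\psi_\eps - u_1)$---small in $L^2$ but not pointwise. One option to absorb this residual is an $L^p$-version of the maximum-principle bound, which requires only integral control on the error. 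An alternative is to bypass the eigenvalue problem entirely and work with a travelling-wave ansatz $T(t,x) = U(x_1 - \psi(x) - ct)$, verifying via a sub-/supersolution argument that pulsating fronts with $A$-independent speed persist because the large-$A$ advection drops out of the profile equation. Either route delivers the uniform lower bound $\mu(\lambda_0, A) \geq -C$ and hence \eqref{1.3}.

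For part (ii), I would argue by contradiction: assume $c_*(A_n) \leq c_0$ along some $A_n \to \infty$. The variational formula supplies $\lambda_n$ in a compact subinterval of $(0,\infty)$ and normalized positive principal eigenfunctions $\varphi_n$ with $\mu(\lambda_n, A_n) \geq -C$. Testing $\mathcal{L}_{\lambda_n, A_n}\varphi_n = \mu(\lambda_n, A_n)\varphi_n$ against $\varphi_n$ and using incompressibility of $u$ to eliminate the drift contributions yields the energy identity
\[
\mu(\lambda_n, A_n) \,=\, \int |\nabla\varphi_n|^2 \,-\, \lambda_n A_n \int u_1 \varphi_n^2.
\]
Passing to logarithmic variables $\chi_n := \log \varphi_n$ and rescaling, the eigenvalue equation becomes a Hamilton--Jacobi-type equation whose large-$A_n$ asymptotics should identify $(\lambda_n A_n)^{-1}\chi_n$, modulo additive constants, with an $H^1$-limit $\psi_\infty$ that weakly solves \eqref{1.2}---the desired contradiction. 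The principal obstacle is ensuring that $\psi_\infty$ is nontrivial (not merely constant); this needs a quantitative non-degeneracy estimate, obtained by comparing $\mu(\lambda_n, A_n)$ against the value $\mu = 0$ taken by the trivial test $\varphi \equiv 1$ and extracting information from the gap.
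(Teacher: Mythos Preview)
The paper does not prove this lemma at all; it is imported from \cite{RZ} (Ryzhik--Zlato\v s) and used as a black box in the final assembly of Theorem~\ref{T.1.1}. So there is no in-paper argument to compare against. Your overall strategy---the Berestycki--Hamel variational formula for the KPP minimal speed together with the exponential gauge $\varphi=e^{\lambda\psi}$ that kills the two $A$-dependent terms---is exactly the mechanism underlying \cite{RZ}, so at the level of ideas you are on the same track as the cited source.

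That said, your sketch has real gaps that are not merely cosmetic. In~(i), the maximum-principle bound $\mu\ge\inf_x\mathcal L\varphi/\varphi$ needs a $C^2$ positive test, and neither of your proposed fixes works as written. Mollifying $\psi$ to $\psi_\eps$ produces a residual $\lambda A(u\cdot\nabla\psi_\eps-u_1)$ that is small in $L^2$ for fixed $\eps$ but carries a factor of $A$; to beat $A$ you would have to send $\eps\to 0$ with $A$, at which point $\Delta\psi_\eps$ (present in your collapsed expression) is uncontrolled since $\psi$ is only $H^1$. The travelling-wave ansatz $U(x_1-\psi(x)-ct)$ has the same defect: with $\psi\in H^1$ only, this is not a classical sub/supersolution. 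The way \cite{RZ} circumvents this is to work with an $H^1$-level (weak/variational) characterization of the relevant eigenvalue, so that $\psi$ can be inserted directly without any pointwise bound; your write-up does not identify this step. In~(ii), your compactness heuristic---rescaled logarithms of principal eigenfunctions converging to an $H^1$ solution of \eqref{1.2}---is the right picture, but you yourself flag the nontriviality of the limit as ``the principal obstacle'' and then do not resolve it; the comparison with the trivial test $\varphi\equiv 1$ only tells you $\mu\le 0$, which does not by itself prevent $\chi_n/(\lambda_n A_n)$ from collapsing to a constant.
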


\begin{proof}[Proof of Theorem \ref{T.1.1}]
If \eqref{1.2} has a solution $\psi\in H^1(2\bbT\times\bbT)$, then
$c_*(A_n)$ is bounded for any KPP $f$ and any $A_n\to\infty$, and so
Lemma \ref{L.4.1} gives Lemma \ref{L.2.2}(ii). Lemmas \ref{L.3.1}
and \ref{L.3.2} now give (i) for any $f$. Note that if each sequence
$A_n$ does not quench some compactly supported initial datum $T_0$
for \eqref{1.1} with $A=A_n$, then there is $T_0$ that is not
quenched by any $A$. This holds because if each
$T_0(x)\equiv\chi_{[-n,n]}(x_1)$ is quenched by some $A_n$, then
this sequence would yield a contradiction.

If, on the other hand, \eqref{1.2} has no
$H^1(2\bbT\times\bbT)$-solutions, then $c_*(A_n)\to\infty$ for any
KPP $f$ and any $A_n\to\infty$, and so Lemma \ref{L.3.1} gives Lemma
\ref{L.2.2}(i). Lemma \ref{L.4.1} now gives \eqref{1.4} for any $f$.
The claim about the existence of $l_0$ follows from the fact that
$T$ solves $T_t- Au^{(l)}\cdot\nabla T = \Delta T + f(T)$ on
$\bbR\times l\bbT$ if and only if $S(t,x)\equiv T(l^2t,lx)$ solves
$S_t- Alu\cdot\nabla S =\Delta S + l^2f(S)$ on $\bbR\times \bbT$.
Comparison theorems and $f\ge 0$ then show that if $u^{(l)}$ is
quenching for $f$, then so is $u^{(\til l)}$ for any $\til l<l$.
This only guarantees $l_0\in[0,\infty]$, but $l_0<\infty$ follows
from Theorem 8.2 in \cite{ZlaMix} and the fact that the flow $u$
leaves the bounded domain $[0,p]\times\bbT$ invariant. For ignition
reactions Lemma \ref{L.4.2} shows $l_0>0$ --- if each $T_0$ is
quenched by at least one $A_nu$ for any sequence $A_n\to\infty$,
then each $T_0$ is quenched by $Au$ for all large $A$.
\end{proof}

Finally, we provide the following extension of Theorem \ref{T.1.1}(ii) to
some positive reactions.

\begin{corollary} \label{C.5.2}
The claim $l_0>0$ in Theorem \ref{T.1.1}(ii) holds for any
combustion-type reaction satisfying $f(s)\le \alpha s^{\beta}$ for
some $\alpha>0$, $\beta>3$, and all $s\in [0,1]$.
\end{corollary}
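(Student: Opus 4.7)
Since the ignition-type case is covered by Theorem~\ref{T.1.1}(ii), I assume $f$ is a positive reaction ($\tht_0=0$) with $f(s)\le\alpha s^\beta$ for some $\beta>3$. The plan is to extend the quenching argument of Lemma~\ref{L.4.2}, which uses the ignition threshold as its stopping point, to this weak positive case. Fix a compactly supported $T_0\in[0,1]$ and work on $\bbR\times l\bbT$ with the flow $Au^{(l)}$.

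First I would carry the curve construction and the Lemma~\ref{L.2.3} bound from the proof of Lemma~\ref{L.4.2} as far as they go without invoking ignition. This yields
\[
\|T(t,\cdot)\|_\infty \le e^{\|f(s)/s\|_\infty\,t}\bigl(\delta+2e^{c\tau}e^{-(c/l^2)t}\bigr),\qquad t\ge\tau,
\]
with $\delta>0$ arbitrary and $A=A_n$ chosen large enough (depending on $\delta,\tau,T_0$). Because $f(s)/s\le\alpha s^{\beta-1}\le\alpha$ on $[0,1]$, I pick $l<\sqrt{c/\alpha}$ so the diffusive rate $c/l^2$ strictly exceeds the reaction rate $\alpha$; optimizing in $t$ makes the right-hand side of order $\delta^{\,1-\alpha l^2/c}$, which tends to zero as $\delta\to 0$. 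Hence for any prescribed $\delta_1>0$ I can arrange $\|T(t_1,\cdot)\|_\infty\le\delta_1$ at some $t_1>0$ by taking $A$ large enough.

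The second step is to iterate. Once a threshold $\delta_1$ is reached that satisfies $\alpha\delta_1^{\beta-1}<c/(2l^2)$, the effective reaction rate on the interval where $\|T\|_\infty\le\delta_1$ is at most half the diffusive decay rate, so restarting the previous argument at time $t_1$ gives a further reduction. Although $T(t_1,\cdot)$ is no longer compactly supported, it inherits Gaussian-type decay at spatial infinity from $T_0$ (the bound $f(s)\le\alpha s^\beta$ makes the nonlinear semigroup a mild perturbation of the linear one, whose tails are Gaussian), and this decay is sufficient to repeat the curve construction of Lemma~\ref{L.4.2}. Each iteration produces $\delta_{k+1}\le C\delta_k^{1+\gamma}$ for some $\gamma>0$, and one checks that $\gamma>0$ corresponds precisely to $\beta>3$ — the 1D Fujita threshold, which emerges here because, after homogenization by the fast cellular flow, the effective equation is one-dimensional. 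Summing the iteration gives $\delta_k\to 0$ and hence $\|T(t,\cdot)\|_\infty\to 0$.

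The main obstacle will be the iterated curve construction when the initial datum is no longer compactly supported: I will need to quantify the Gaussian decay of $T(t_k,\cdot)$ at infinity and verify that it is preserved from step to step under the polynomial reaction perturbation. The condition $\beta>3$ is essential exactly here, via the standard Fujita balance between the $t^{-1/2}$ decay of the effective 1D heat kernel and the nonlinear source $\alpha s^\beta$.
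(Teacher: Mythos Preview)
Your route differs from the paper's, and the iteration step has a real gap.

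The paper does not iterate at all. After the rescaling $S(t,x)=T(l^2t,lx)$ it suffices to show $u$ quenches for $l^2f$, and for this it invokes a Fujita--type criterion of Meier (see also \cite[Lemma~2.1]{ZlaArrh}): quenching holds as soon as
\[
l^2\alpha(\beta-1)\int_0^\infty \|\phi(t,\cdot)\|_\infty^{\beta-1}\,dt < 1,
\]
where $\phi$ is the \emph{linear} solution of \eqref{2.2} with $\phi_0=T_0$. The integral is then bounded in two regimes. For $t\ge t_0$ one uses the universal estimate $\|\phi(t,\cdot)\|_\infty\le \til c\,|\supp T_0|\,t^{-1/2}$ from \eqref{3.3}; the tail $\int_{t_0}^\infty t^{-(\beta-1)/2}\,dt$ converges exactly when $\beta>3$. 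For $t\le t_0$ one uses the exponential bound $\|\phi(t,\cdot)\|_\infty\le 5e^{-ct}$ coming from the curve construction in the proof of Lemma~\ref{L.4.2}, carried out \emph{once} for a sufficiently large $A$. Then $l$ is simply taken small. No restart is needed because the criterion involves only the linear evolution.

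Your scheme, by contrast, cannot close as written. The curve construction in Lemma~\ref{L.4.2} selects $A_n$ through Lemma~\ref{L.2.2}(i) \emph{depending on the target $\delta$}; once $A$ is fixed after step one you cannot re-run it to get a smaller floor. If instead you keep the same curve, then since $T(t_1,\cdot)\le e^{\alpha t_1}\phi(t_1,\cdot)$ the curve bound for the restarted linear solution is $\phi_1(s,h(\cdot))\le e^{\alpha t_1}\delta$, and your own first-step computation gives $e^{\alpha t_1}\delta\approx\delta_1$. So the floor at step two is already of order $\delta_1$, and optimizing in $s$ yields $\delta_2\gtrsim\delta_1$, not $\delta_2\le C\delta_1^{1+\gamma}$. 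The super-linear contraction you assert is never derived; you only cite the Fujita heuristic. What actually produces the threshold $\beta>3$ is the $t^{-1/2}$ decay of $\|\phi(t,\cdot)\|_\infty$ from \eqref{3.3}, which your argument does not use. Once that estimate is brought in, the natural way to package everything is the Meier integral above---and the iteration becomes unnecessary.
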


\begin{proof}
%
By the proof of Theorem \ref{T.1.1}, it is sufficient to show that
there is $l>0$ such that $u$ is quenching for $l^2f(s)$. The proof
is essentially identical to that of Theorem 8.3 in \cite{ZlaMix}. We
let $I_A\equiv \int_0^\infty \|\phi(t,\cdot)\|_\infty^{\beta-1}\,dt$
where $\phi$ is the solution of \eqref{2.2} and $\phi_0(x)\equiv
T_0(x)$. It follows from \cite{Me} (see also \cite[Lemma
2.1]{ZlaArrh}) that $u$ is quenching for $l^2f(s)$ when for each
compactly supported $T_0$ there is $A_0$ such that
$l^2\alpha(\beta-1)I_{A}<1$ whenever $A\ge A_0$. So fix $T_0$ and
notice that the bound $\|\phi(t,\cdot)\|_\infty\le \til c |\supp\,
T_0| t^{-1/2}$ for $t\ge 1$, which follows from \eqref{3.3}, gives
$\int_{t_0}^\infty \|\phi(t,\cdot)\|_\infty^{\beta-1}\,dt\le 1$ if
$t_0$ is chosen appropriately (depending on $\til c|\supp\, T_0|$).
For $t\le t_0$ we use the bound $\|\phi(t,\cdot)\|_\infty\le
5e^{-ct}$, which follows from the proof of Lemma \ref{4.2} (with the
same $c$) provided $A_0$ is chosen large enough so that $\delta$ in
that proof is smaller than $e^{-ct_0}$ for each $A\ge A_0$ (and
$\tau$ is such that $e^{c\tau}\le 2$). This choice is possible
because each sequence $A_n\to\infty$ has a term $A_n$ guaranteeing
$\delta<e^{-ct_0}$. Hence for $A\ge A_0$ we have
\[
\int_0^{t_0} \|\phi(t,\cdot)\|_\infty^{\beta-1}\,dt \le
\int_0^{\infty} (5e^{-ct})^{\beta-1}\,dt \equiv C <\infty.
\]
Now let $l>0$ be such that $l^2\alpha(\beta-1)(1+C)<1$, and we are
done.
\end{proof}


\end{document}